\documentclass{amsart}%
\usepackage{amsmath}
\usepackage{amssymb}
\usepackage{amsfonts}
\usepackage{graphicx}%
\setcounter{MaxMatrixCols}{30}
\providecommand{\U}[1]{\protect\rule{.1in}{.1in}}
\newtheorem{theorem}{Theorem}

\newtheorem{lemma}[theorem]{Lemma}

\newtheorem{proposition}[theorem]{Proposition}

\begin{document}
\title{Orbital Measures on $SU(2)/ SO(2)$}
\author{Boudjem\^aa Anchouche}
\author{Sanjiv Kumar Gupta}
\author{Alain Plagne}
\address{B. Anchouche, Department of Mathematics and Statistics, Sultan Qaboos
University, PO Box 36, Al-Khoud 123, Muscat, Sultanate of Oman}
\email{anchouch@squ.edu.om}
\address{S. K. Gupta, Department of Mathematics and Statistics, Sultan Qaboos
University, PO Box 36, Al-Khoud 123, Muscat, Sultanate of Oman}
\email{gupta@squ.edu.om}
\address{A. Plagne, Centre de Math\' ematiques Laurent Schwartz, \' Ecole
polytechnique, 91128 Palaiseau Cedex, France}
\email{plagne@math.polytechnique.fr}
\thanks{The first and second authors are supported by the Sultan Qaboos University Grants}
\thanks{The third author is supported by the ANR Caesar grant number ANR-12-BS01-0011}
\date{\today }

\begin{abstract}
We let $U=SU(2)$ and $K=SO(2)$ and denote $N_{U}\left(  K\right)  $ the
normalizer of $K$ in $U$. For $a$ an element of $U\backslash N_{U}\left(
K\right)  $, we let $\mu_{a}$ be the normalized singular measure supported in
$KaK$. For $p$ a positive integer, it was proved in \cite{gupta-hare1} that
$\mu_{a}^{\left(  p\right)  }$, the convolution of $p$ copies of $\mu_{a},$ is
absolutely continuous with respect to the Haar measure of the group $U$ as
soon as $p\geq2$. The aim of this paper is to go a step further by proving the
following two results :
(i) for every $a$ in $U\backslash N_{U}\left(  K\right)  $ and every integer
$p\geq3$, the Radon-Nikodym derivative of $\mu_{a}^{\left(  p\right)  }$ with
respect to the Haar measure $m_{U}$\ on $U$, namely $\mathrm{d}\mu
_{a}^{\left(  p\right)  }/\mathrm{d}m_{U}$, is in $L^{2}(U)$, and
(ii) there exist $a$ in $U\backslash N_{U}\left(  K\right)  $ for which
$\mathrm{d}\mu_{a}^{\left(  2\right)  }/\mathrm{d}m_{U}$ is not in $L^{2}(U),$
hence a counter example to the dichotomy conjecture stated in
\cite{gupta-hare3}. Since $L^{2}\left(  G\right)  \subseteq L^{1}\left(  G\right)  $, our result
gives in particular a new proof of the result in \cite{gupta-hare1} when
$p>2.$

\end{abstract}
\maketitle

\section{Introduction}

Let $M$ be a symmetric space of non compact type and $G$ be the identity component
of the isometry group of $M.$ Let $p$ be in $M$ and let $K$ be the subgroup of
$G$ fixing $p$ that is, $K=\left\{  g\in G\mid gp=p\right\}$. It is well known
that $G$ is a semisimple Lie group with trivial center and no compact factor
and $K$ is a maximal compact subgroup of $G$ (see \cite{eb} or \cite{hel}). Moreover, the
map $\zeta:G/K\rightarrow M$, defined by $\zeta\left(  gK\right)  =gp$ is a
diffeomorphism, and if we endow $G/K$ with the pull back of the metric of $M$,
then $\zeta$ becomes an isometry.

Let $\mathfrak{g}=\mathfrak{k}\oplus\mathfrak{p}$ be a Cartan decomposition of
the Lie algebra $\mathfrak{g}$ of $G.$ Let now $\mathfrak{u=k}\oplus i$
$\mathfrak{p}$ and let $U$ be a Lie group with Lie algebra $\mathfrak{u}$.
Then $U$ is a compact group and $\widetilde{M}=U/K$ is a compact symmetric
space, called the dual of $M.$

Let $\mathfrak{a}$ be a maximal abelian subspace of $\mathfrak{p,}$ $H$ be an
element of $\mathfrak{a}$ and $a=\exp\left(  iH\right)$. We finally define $\mu_{a}$ 
to be the normalized singular measure supported in the double coset $KaK$. For a
positive integer $p$, we denote by $\mu_{a}^{\left(  p\right)  }$ the convolution
of $p$ copies of $\mu_{a}$ and we let
\[
p_{U/K}\left(  a\right)  =\min\left\{  p\in \mathbb{N} \text{ such that }\mu_{a}^{\left(  p\right)  }\text{ is absolutely continuous with respect to } m_{U}\right\}
\]
where $m_{U}$ is the Haar measure of the group $U$. 

In the case $U=SU\left(  n\right)$ and $K=SO\left(  n\right)$, 
the second author and K. Hare proved in \cite{gupta-hare1} that for any point $a$ 
not in the normalizer of $SO\left(  n\right)$ in $SU\left(  n\right)$, $p_{SU(n)/SO(n)}\left(  a\right)  $ is equal to
the rank of the symmetric space $SU(n)/SO(n)$, that is, 
$$
p_{SU(n)/SO(n)}\left(a\right)  =n.
$$

In \cite{gupta-hare3}, S. K. Gupta and K. Hare conjectured that the Radon-Nikodym derivative
of $\mu_{a}^{\left(  p\right) }$ with respect to the Haar measure $m_{U}$, 
which is thereafter denoted by $\mathrm{d}\mu_{a}^{\left(  p\right)  }/\mathrm{d}m_{U}$, 
is in $L^{2}\left( U\right)$ as soon as $p\geq p_{U/K}\left(  a\right)$. 

In this paper, we start the investigation of the $L^{2}-$regularity of
$\mathrm{d}\mu_{a}^{\left(  p\right)  }/\mathrm{d}m_{SU\left(  2\right)}$
since it turns out that already the case of $SU(2)$ needs quite a
lot of efforts.

As a first result, we give a counter example to the Gupta-Hare conjecture mentioned above
in the case $U=SU(2)$. 
We also prove that
$$
\frac{\mathrm{d}\mu_{a}^{\left(  p\right)}}{\mathrm{d}m_{SU\left(  2\right)}} \in L^{2}\left( SU\left(  2\right) \right)
$$ 
as soon as $p\geq 3$ and, as a corollary, we give a new proof of the result quoted above 
on the absolute continuity of $\mu_{a}^{\left(  p\right)  }$ in the case $p\geq3$ and $n=2.$ 

Here is the plan of the present article. In  Section \ref{sec2}, we say a bit more about 
the normalized singular measure supported in the double coset $KaK$ and 
announce our main result (Theorem \ref{Main}). In section \ref{sec3}, we
present the general setting of Fourier transform of orbital measures that we
make precise in the case of $SU(2)/SO(2)$ in Section \ref{sec4}. Sections
\ref{sec5} and \ref{sec6} contain the two parts of the proof of our main theorem.

\section{Orbital Measures and the Main Theorem}
\label{sec2}

Let $K$ and $U$ as above and consider the natural action of $K\times K$ on $U
$ defined as follows%
\[%
\begin{array}
[c]{cccc}%
\sigma: & \left(  K\times K\right)  \times U & \longrightarrow & U\\
& \left(  \left(  k_{1},k_{2}\right)  ,a\right)  & \longmapsto & k_{1}ak_{2}.
\end{array}
\]
The orbit of a point $a\in U$ under the action $\sigma$ is the double coset
space $KaK.$ Since $U=KAK,$ where $A=\exp\left(  i\mathfrak{a}\right)  $ (see
\cite{AWK}, p. $458$), we will assume in all what follows that $a=\exp\left(
iH\right)  $, with $H$ in $\mathfrak{a}$.

Each orbit is equipped with a unique $K\times K$-invariant ($K-$bi-invariant)
measure $\mu_{a}$, defined as follows : for any $f$ in $C(U)$, the set of
continuous functions on $U$, we put
\[
\left\langle \mu_{a},f\right\rangle =\int\limits_{U}f\left(  x\right)
\mathrm{d}\mu_{a}\left(  x\right)  =\int\limits_{K}\int\limits_{K}
f(k_{1}ak_{2})\mathrm{d}m_{K}(k_{1})\mathrm{d}m_{K}(k_{2}).
\]

The support of the measure $\mu_{a},$ denoted by $\operatorname{supp}\left(  \mu_{a}\right)$, 
is $KaK$.
Since $\operatorname{supp}\left(  \mu_{a}\right)  =KaK$ has an empty\ interior
in $U$, the measure $\mu_{a}$ is singular with respect to the Haar measure
$m_{U}$ of the group $U$. Clearly, $\mu_{a}$ is continuous if and if only $a$
is not in the normalizer of $K$ in $U.$

We recall that the convolution of two measures $\mu$ and $\nu$ on $U,$ denoted
by $\mu\ast\nu,$ is defined, for any $f\in C\left(  U\right)  $, by
\[
\left\langle \mu\ast\nu,f\right\rangle =\int_{U}f\left(  x\right)
\mathrm{d}\left(  \mu\ast\nu\right)  =\int_{U}\int_{U}f(xy)\mathrm{d}
\mu(x)\mathrm{d}\nu(y).
\]
Denote the convolution of $p$ copies of a given measure $\mu$, namely $\mu
\ast\mu\ast...\ast\mu$ (with $p-1$ signs $\ast$) by $\mu^{(p)}$. Then for
$f\in C\left(  U\right)  ,$
\[
\left\langle \mu_{a}^{\left(  p\right)  },f\right\rangle =\int_{U}\cdots
\int_{U}f\left(  g_{1}\dots g_{p}\right)  \mathrm{d}\mu_{a}\left(
g_{1}\right)  \cdots\mathrm{d}\mu_{a}\left(  g_{p}\right)  .
\]
Since $\operatorname{supp}\left(  \mu_{a}\right)  =KaK,$ we have
\[
\operatorname{supp}\left(  \mu_{a}^{\left(  p\right)  }\right)  =\left(
KaK\right)  ^{p}.
\]

Specializing what we mentioned above, it was proved in \cite{gupta-hare1} that
$\mu_{a}^{\left(  p\right)  }$ is absolutely continuous with respect to the
Haar measure if and only if $p\geq2$. Then the Radon-Nikodym Theorem asserts
the existence of a function $f_{a,p}\in L^{1}\left(  U,\mathrm{d}m_{U}\right)
$ such that for $p\geq2,$
\[
\mathrm{d}\mu_{a}^{\left(  p\right)  }=f_{a,p}\mathrm{d}m_{U}.
\]

Here, we shall prove that $\mathrm{d}\mu_{a}^{\left(  p\right)  }/\mathrm{d}
m_{U}$ is actually in $L^{2}(U)$ provided $p\geq3$. More precisely, we prove
the following result, which is our main result.

\begin{theorem}
\label{Main} 
Let $U=SU(2)$, $K=SO(2)$ and denote $N_{U}\left(  K\right)  $ the
normalizer of $K$ in $U$. For $a$ an element of $U\backslash N_{U}\left(
K\right)  $, let $\mu_{a}$ denote the normalized singular measure supported in
$KaK$. Then

\begin{enumerate}
\item[(i)] for $p\geq3$, the measure $\mu_{a}^{(p)}$ is absolutely continuous
with respect to the Haar measure on $U$ and $\mathrm{d}\mu_{a}^{\left(
p\right)  }/\mathrm{d}m_{U}$ is in $L^{2}(U)$,

\item[(ii)] there exist an element $a\in U\backslash N_{U}\left(  K\right)  $
for which $\mathrm{d}\mu_{a}^{\left(  2\right)  }/\mathrm{d}m_{U}$ is not in
$L^{2}(U)$.
\end{enumerate}
\end{theorem}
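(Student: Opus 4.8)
The plan is to use harmonic analysis on $SU(2)$, exploiting the fact that the irreducible unitary representations of $SU(2)$ are the symmetric powers $V_n = \mathrm{Sym}^n(\mathbb{C}^2)$ of dimension $n+1$, indexed by $n \geq 0$. For a $K$-bi-invariant measure like $\mu_a$, the operator-valued Fourier coefficient $\widehat{\mu_a}(n) = \int_U \pi_n(x)^{-1}\,\mathrm{d}\mu_a(x)$ is, after a suitable choice of basis adapted to the $SO(2)$-action, a diagonal (in fact rank-one-ish, since $SU(2)/SO(2)$ is a rank-one symmetric space with a one-dimensional space of $K$-fixed vectors in each $V_{2m}$ and none in odd $V_n$) matrix whose single nonzero entry is a value of a Jacobi/Legendre-type polynomial; concretely, parametrizing $a = \exp(iH)$ by an angle $\theta$, one gets $\widehat{\mu_a}(2m) = c_m(\theta)$ where $c_m$ is (up to normalization) the Legendre polynomial $P_m(\cos 2\theta)$ or equivalently a Gegenbauer polynomial, and $c_0 = 1$. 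This is the content of Sections \ref{sec3}--\ref{sec4} that I am allowed to assume.

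Given this, by the Plancherel theorem on $SU(2)$, the Radon--Nikodym derivative $f_{a,p} = \mathrm{d}\mu_a^{(p)}/\mathrm{d}m_U$ lies in $L^2(U)$ if and only if
\[
\sum_{m \geq 0} (2m+1)\,\bigl|\widehat{\mu_a^{(p)}}(2m)\bigr|^2 \cdot (\text{rank factor}) \;=\; \sum_{m\geq 0} (2m+1)\,|c_m(\theta)|^{2p} \;<\; \infty,
\]
using that convolution corresponds to multiplication of Fourier coefficients, so $\widehat{\mu_a^{(p)}}(2m) = c_m(\theta)^p$, and that the $K$-bi-invariance collapses the Hilbert--Schmidt norm $\|\widehat{\mu_a^{(p)}}(2m)\|_{HS}^2$ to $|c_m(\theta)|^{2p}$. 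Thus everything reduces to the decay rate of $|c_m(\theta)|$ as $m \to \infty$. The key estimate is the classical asymptotic for Legendre/Gegenbauer polynomials away from the endpoints: for $0 < 2\theta < \pi$ one has $|P_m(\cos 2\theta)| = O(m^{-1/2})$ uniformly on compact subsets, and more precisely $P_m(\cos\phi) \sim \sqrt{2/(\pi m \sin\phi)}\cos((m+\tfrac12)\phi - \tfrac{\pi}{4})$. Plugging this in, the general term behaves like $(2m+1)\cdot m^{-p} = O(m^{1-p})$, so the series converges as soon as $p \geq 3$ (indeed $p>2$ suffices for convergence, with $p=3$ giving $\sum m^{-2} < \infty$), which proves part (i). The absolute continuity statement in (i) is then immediate since $L^2(U) \subset L^1(U)$ and $f_{a,p}\,\mathrm{d}m_U$ already represents $\mu_a^{(p)}$.

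For part (ii) I would run the same computation with $p = 2$: the series becomes $\sum_{m\geq 0}(2m+1)\,|c_m(\theta)|^4 \asymp \sum_m (2m+1)\,m^{-2}\cdot|\cos((m+\tfrac12)2\theta - \tfrac\pi4)|^4 \asymp \sum_m m^{-1}\,(\cos(\cdots))^4$. Since the average value of $(\cos)^4$ is $3/8 \neq 0$, this is comparable to the harmonic series $\sum m^{-1}$, which diverges --- for \emph{every} $\theta$ with $a \notin N_U(K)$, in fact. So it would suffice to exhibit one such $a$ (e.g. $\theta = \pi/4$ is a convenient explicit choice, for which $\cos 2\theta = 0$ and $P_m(0)$ has a closed form), verify that $a \notin N_U(K)$, and conclude $f_{a,2} \notin L^2(U)$. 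The main obstacle, and where the ``quite a lot of efforts'' alluded to in the introduction lies, is making the Gegenbauer asymptotics \emph{rigorous and uniform} enough to get matching upper and lower bounds on the tail of the series --- a pointwise $\sim$ is not by itself enough to bound $\sum m^{-1}(\cos(\cdots))^4$ from below, so one needs either a quantitative Mehler--Heine/Darboux-type expansion with controlled error term, or an integral/orthogonality argument summing $P_m(0)^4$ directly; handling $\theta$ near $0$ or $\pi/2$ (the singular/endpoint regime) in part (i) uniformly in $a$ may also require a separate argument, e.g. the crude uniform bound $|P_m(x)| \leq 1$ combined with a more careful estimate in an $m$-dependent neighborhood of the endpoint. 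Once the decay $|c_m(\theta)| \ll m^{-1/2}$ is established with the constant under control, both parts follow by elementary comparison of series.
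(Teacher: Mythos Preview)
Your plan is correct and, once the Laplace--Heine expansion is invoked with its standard $O(m^{-3/2})$ error term, it goes through cleanly. It is genuinely different from what the paper does. The paper never identifies $\varphi_{2n}(a_\vartheta)$ as $P_n(\cos 2\vartheta)$; instead it works directly with the combinatorial expression $t_n(\vartheta)=\sum_k\binom{2k}{k}\binom{2n-2k}{n-k}e^{4ik\vartheta}$ and obtains the upper bound $|t_n(\vartheta)|\le C(\vartheta)4^n/\sqrt n$ by a single Abel summation exploiting the monotonicity of $k\mapsto\binom{2k}{k}\binom{2n-2k}{n-k}$ on $[0,n/2]$. For part~(ii) the paper performs a \emph{second} Abel summation to extract a main term $u_n(\vartheta)v_0$ and shows that it dominates the remainder provided $\sin(2\vartheta)\sin(2(n+1)\vartheta)>1/2$; this forces the restriction $\vartheta\in(\pi/12,5\pi/12)$ and yields the lower bound only on a positive-lower-density set of $n$, after which a separate lemma (``positive lower density $\Rightarrow$ divergent harmonic subsum'') closes the argument. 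Your route via the classical asymptotic $P_m(\cos\phi)=\sqrt{2/(\pi m\sin\phi)}\cos\bigl((m+\tfrac12)\phi-\tfrac\pi4\bigr)+O(m^{-3/2})$ is shorter and in fact stronger: writing $\cos^4 x=\tfrac38+\tfrac12\cos 2x+\tfrac18\cos 4x$ and using Dirichlet's test on the oscillatory pieces shows $\sum_m m^{-1}\cos^4\bigl((2m{+}1)\vartheta-\tfrac\pi4\bigr)=\infty$ for \emph{every} $\vartheta\in(0,\pi/2)$, not just the paper's subinterval. What the paper's approach buys is self-containment: it needs nothing beyond Abel summation and Stirling, whereas yours imports the Szeg\H{o}/Darboux asymptotics for Legendre polynomials. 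Your worry about uniformity in $\vartheta$ near the endpoints is unnecessary for part~(i), since $a$ is fixed there; the Laplace--Heine bound for that single fixed $\vartheta$ suffices.
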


Sections \ref{sec5} contains the proof of point (i) of Theorem \ref{Main}
while Section \ref{sec6} contains the one of (ii) in a stronger form (much more than 
one element $a$ will be exhibited for which $\mathrm{d}\mu_{a}^{\left(  2\right)  }/\mathrm{d}m_{U}$ is not in
$L^{2}(U)$).

This work is motivated by the article of the second-named author (joint with
K. Hare) in \cite{gupta-hare2} in which it was proved that if $G$ is a
compact, simple Lie group and $a$ is not in the center of $G$, then $\mu_{a},$
the measure supported on the conjugacy class of $a,$ satisfies the following
dichotomy: for each natural number $p,$
\[
\text{either }\mu_{a}^{\left(  p\right)  }\text{ is singular}\hspace
{0.5cm}\text{or}\hspace{0.5cm}\mu_{a}^{\left(  p\right)  }\in L^{2}(G)
\]
In other words,
\begin{equation}
\mu_{a}^{\left(  p\right)  }\in L^{1}(G)\hspace{0.5cm}\text{if and only
if}\hspace{0.5cm}\mu_{a}^{\left(  p\right)  }\in L^{2}(G). \label{star}%
\end{equation}
Since compact simple Lie groups can be seen as compact symmetric spaces by
identifying $G$ with $(G\times G)/\Delta$, where $\Delta$ is the diagonal of
$G\times G$, these authors further conjectured in \cite{gupta-hare3} that the
dichotomy above holds for any compact symmetric space$.$ More precisely, they
conjectured that if $U/K$ is a compact symmetric space and $a$ is not in the
normalizer of $K$ in $U$ and $\mu_{a}$ is the normalized singular measure
supported in $KaK,$ then \eqref{star} holds.

Theorem \ref{Main} shows that the dichotomy conjecture is false on the compact
symmetric space $SU(2)/SO(2)$.

\section{Fourier Transform of Orbital Measures}
\label{sec3}

In this section, $U/K$ will denote an arbitrary compact symmetric space. For
each irreducible representation $\pi:U\rightarrow GL\left(  E_{\pi}\right)  $
of $U$, we fix a $U$-invariant inner product\ in $E_{\pi},$ which we denote
for simplicity by $\left(  .,.\right)  $ and denote by $\left\Vert
.\right\Vert $ the corresponding norm. The set of equivalence classes of
irreducible unitary representations of $U$ will be denoted by $\widehat{U}$.
An irreducible unitary representation $\left(  \pi,E_{\pi}\right)  $ is called
\emph{of class one}, or \emph{spherical}, if $\dim E_{\pi}^{K}=1,$ where
\[
E_{\pi}^{K}=\left\{  X\text{ in }E_{\pi}\mid\pi\left(  k\right)  X=X\text{ for
all }k\text{ in }K\right\}  .
\]
The Fourier transform of $\mu_{a}$ at an irreducible unitary representation
$\pi:U\rightarrow GL(E_{\pi}),$ denoted by $\widehat{\mu_{a}}\left(
\pi\right)  ,$ is an element of $\mathrm{End}(E_{\pi})$ given by (see
\cite{hr}, p. $77$),
\[
\widehat{\mu_{a}}\left(  \pi\right)  \left(  X\right)  :=\int_{U}\pi\left(
g^{-1}\right)  X\mathrm{d}\mu_{a}\left(  g\right)  .
\]
Note first that the $U$-invariant inner product $\left(  .,.\right)  $\ in
$E_{\pi}$ induces a Hilbert structure on $\mathrm{End}(E_{\pi})$ defined as
follows: we define the Hilbert-Schmidt inner product of two elements $S$,
$T\in\mathrm{End}(E_{\pi}),$ denoted $\left(  S,T\right)  _{\mathrm{HS}},$ by
the formula
\[
\left(  S,T\right)  _{\mathrm{HS}}=\sum\limits_{i=1}^{\dim E_{\pi}}\left(
Se_{i},Te_{i}\right)  ,
\]
where $\left\{  e_{1},...,e_{\dim E_{\pi}}\right\}  $ is an orthonormal basis
of $E_{\pi}.$ It can be proved that the Hilbert-Schmidt \ inner product is
independent of the choice of the orthonormal basis of $E_{\pi}.$ We denote the
corresponding norm by $\left\Vert .\right\Vert _{\mathrm{HS}},$ that is, for
$T\in\mathrm{End}(E_{\pi})$, we put
\[
\left\Vert T\right\Vert _{\mathrm{HS}}^{2}=\left(  T,T\right)  _{\mathrm{HS}%
}=\sum\limits_{i=1}^{\dim E_{\pi}}\left\Vert Te_{i}\right\Vert _{\pi}^{2}%
\]
and
\[
\left\Vert \widehat{\mu_{a}^{\left(  p\right)  }}\right\Vert _{\mathrm{2}}%
^{2}=\sum\limits_{\left[  \pi\right]  \in\widehat{U}}\left(  \dim E_{\pi
}\right)  \left\Vert \widehat{\mu^{\left(  p\right)  }}\left(  \pi\right)
\right\Vert _{\mathrm{HS}}^{2}.
\]
For more details on the material of this section see \cite{bour}, or \cite{hr}.

The aim of this section is to prove the following result.

\begin{proposition}
\label{prop1} Let $p\geq2$. Then
\[
\left\Vert \widehat{\mu_{a}^{\left(  p\right)  }}\right\Vert _{\mathrm{2}}%
^{2}=\sum\limits_{\substack{\left[  \pi\right]  \in\widehat{U} \\\pi\text{
spherical}}}\left(  \dim E_{\pi}\right)  \left\vert \left(  \pi\left(
a\right)  X_{\pi},X_{\pi}\right)  \right\vert ^{2p}.
\]
where $X_{\pi}$ a basis for $E_{\pi}^{K}$ with $\left\Vert X_{\pi}\right\Vert
=1$.
\end{proposition}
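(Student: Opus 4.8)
The plan is to combine the Plancherel theorem for the compact group $U$ with the $K$-bi-invariance of $\mu_a$ and the classical structure theory of spherical representations. First I would recall that for a measure $\nu$ on $U$ that is absolutely continuous with density $f\in L^2(U)$, Plancherel gives $\|f\|_{L^2(U)}^2=\sum_{[\pi]\in\widehat U}(\dim E_\pi)\,\|\widehat\nu(\pi)\|_{\mathrm{HS}}^2$, and that $\widehat{\mu_a^{(p)}}(\pi)=\widehat{\mu_a}(\pi)^p$ because Fourier transform turns convolution into composition of endomorphisms (with the convention used for $\widehat{\mu_a}$ in the excerpt, one checks $\widehat{\mu\ast\nu}(\pi)=\widehat\nu(\pi)\circ\widehat\mu(\pi)$ or $\widehat\mu(\pi)\circ\widehat\nu(\pi)$, and since all factors are equal this ambiguity is harmless). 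So the whole computation reduces to identifying $\widehat{\mu_a}(\pi)$ as an operator on $E_\pi$ and computing $\|\widehat{\mu_a}(\pi)^p\|_{\mathrm{HS}}^2$.

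Next I would exploit the $K$-bi-invariance. For $k_1,k_2\in K$ one has $\mu_{k_1 a k_2}=\mu_a$, hence $\widehat{\mu_a}(\pi)$ commutes appropriately with $\pi(K)$ on both sides; concretely, $\widehat{\mu_a}(\pi)=\int_K\int_K \pi(k_2)^{-1}\pi(a)^{-1}\pi(k_1)^{-1}\,dm_K(k_1)\,dm_K(k_2) = P_K\,\pi(a)^{-1}\,P_K$, where $P_K=\int_K\pi(k)\,dm_K(k)$ is the orthogonal projection of $E_\pi$ onto the $K$-fixed subspace $E_\pi^K$. Now the key dichotomy enters: if $\pi$ is not spherical then $E_\pi^K=0$, so $P_K=0$ and $\widehat{\mu_a}(\pi)=0$, so such $\pi$ contributes nothing to the sum; if $\pi$ is spherical then $\dim E_\pi^K=1$, spanned by the unit vector $X_\pi$, and $P_K(\,\cdot\,)=(\,\cdot\,,X_\pi)X_\pi$, so $\widehat{\mu_a}(\pi)$ is the rank-one operator $Y\mapsto (\pi(a)^{-1}Y,X_\pi)\,X_\pi$ — equivalently, on the line $\mathbb{C}X_\pi$ it acts as multiplication by the scalar $\lambda_\pi:=(\pi(a)^{-1}X_\pi,X_\pi)=\overline{(\pi(a)X_\pi,X_\pi)}$ (using unitarity), and it kills the orthogonal complement.

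From here the $p$-th power is immediate: $\widehat{\mu_a}(\pi)^p$ is again the rank-one operator $Y\mapsto \lambda_\pi^p\,(Y,X_\pi)X_\pi$ (for $p\ge 1$), so $\|\widehat{\mu_a}(\pi)^p\|_{\mathrm{HS}}^2=|\lambda_\pi|^{2p}=|(\pi(a)X_\pi,X_\pi)|^{2p}$. Substituting into Plancherel and discarding the non-spherical terms yields exactly
\[
\|\widehat{\mu_a^{(p)}}\|_2^2=\sum_{\substack{[\pi]\in\widehat U\\ \pi\text{ spherical}}}(\dim E_\pi)\,|(\pi(a)X_\pi,X_\pi)|^{2p},
\]
which is the assertion of Proposition~\ref{prop1}. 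I should be a little careful about one point: the Plancherel identity in the stated form presupposes $\mu_a^{(p)}\in L^2(U)$, whereas the proposition is stated for all $p\ge 2$; the clean way to handle this is to interpret $\|\widehat{\mu_a^{(p)}}\|_2^2$ as the (possibly infinite) right-hand sum by definition — as is in fact done in the displayed definition of $\|\widehat{\mu_a^{(p)}}\|_2^2$ in the text — so that the identity holds as an equality in $[0,\infty]$, and membership in $L^2(U)$ is then equivalent to finiteness of that sum. The only genuinely non-formal ingredient, and the main thing to state carefully, is the identification $\widehat{\mu_a}(\pi)=P_K\,\pi(a^{-1})\,P_K$ together with the class-one/non-class-one dichotomy for $\dim E_\pi^K$; everything else is bookkeeping with the Hilbert–Schmidt norm of a rank-one operator.
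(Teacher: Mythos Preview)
Your argument is correct and follows essentially the same route as the paper: the paper's Lemmas establish piecewise that $\widehat{\mu_a}(\pi)$ vanishes for non-spherical $\pi$ and, for spherical $\pi$, has all matrix entries zero except the $(1,1)$ entry equal to $(\pi(a^{-1})X_\pi,X_\pi)$, which is exactly your identity $\widehat{\mu_a}(\pi)=P_K\,\pi(a^{-1})\,P_K$ together with the rank-one description. One small slip: the displayed formula $Y\mapsto (\pi(a)^{-1}Y,X_\pi)X_\pi$ is $P_K\pi(a^{-1})$, not $P_K\pi(a^{-1})P_K$, and does \emph{not} kill $(\mathbb{C}X_\pi)^\perp$; the correct rank-one form is $Y\mapsto \lambda_\pi\,(Y,X_\pi)X_\pi$, which is in fact what you use in the next sentence, so the subsequent computation of $\|\widehat{\mu_a}(\pi)^p\|_{\mathrm{HS}}^2=|\lambda_\pi|^{2p}$ stands.
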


In order to prove the proposition we first need four preparatory Lemmas.
Let
\[
C^{\#}\left(  U\right)  =\left\{  f\text{ in }C\left(  U\right)  \mid f\left(
k_{1}gk_{2}\right)  =f\left(  g\right)  \text{ for all }k_{1},k_{2}\text{ in
}K\text{ and }g\text{ in }U\right\}  .
\]
The pair $\left(  U,K\right)  $ is said to be a \emph{Gelfand pair} if the
algebra $C^{\#}\left(  U\right)  $, under convolution, is abelian.

\begin{lemma}
The space $E_{\pi}^{K}$ is at most one-dimensional, or equivalently,
\[
\dim E_{\pi}^{K}=0 \text{\hspace{0.5cm} or \hspace{0.5cm}}\dim E_{\pi}^{K}=1.
\]

\end{lemma}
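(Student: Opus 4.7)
My plan is to exploit the notion of Gelfand pair just introduced: I would first show that $(U,K)$ is itself a Gelfand pair, a consequence of the symmetric-space structure of $U/K$, and then deduce from the commutativity of $C^{\#}(U)$ that $\dim E_\pi^K\leq 1$ for every $[\pi]\in\widehat{U}$.

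For the first step, let $\theta$ be the Cartan involution of $U$, i.e., the involutive automorphism of $U$ whose differential equals $+\mathrm{id}$ on $\mathfrak{k}$ and $-\mathrm{id}$ on $i\mathfrak{p}$. Then $\theta$ fixes $K$ pointwise, and for $a=\exp(iH)\in A$ one has $\theta(a)=a^{-1}$. Define $\sigma:U\to U$ by $\sigma(g):=\theta(g^{-1})=\theta(g)^{-1}$; this is an involutive anti-automorphism of $U$ preserving Haar measure. Using $g=k_1 a k_2$ from the $KAK$ decomposition, one computes $\sigma(g)=k_2^{-1}a k_1^{-1}$, so every $K$-bi-invariant continuous function $f$ satisfies $f\circ\sigma=f$. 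A change of variable $y\mapsto\sigma(y)$ in $(f*h)(g)=\int_U f(y)h(y^{-1}g)\,\mathrm{d}m_U(y)$, combined with the anti-automorphism property of $\sigma$ and with $f\circ\sigma=f$, $h\circ\sigma=h$, then yields $f*h=h*f$ for all $f,h\in C^{\#}(U)$, so $(U,K)$ is a Gelfand pair.

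For the second step, by Peter--Weyl $L^2(U)\cong \bigoplus_{[\pi]\in\widehat{U}} E_\pi\otimes E_\pi^{*}$ as a $U\times U$-module. Taking $(K\times K)$-invariants identifies the space of $K$-bi-invariant $L^2$ functions (the $L^2$-closure of $C^{\#}(U)$) with $\bigoplus_{[\pi]} E_\pi^K\otimes (E_\pi^K)^{*}\cong \bigoplus_{[\pi]}\mathrm{End}(E_\pi^K)$. Under this identification, convolution on the left corresponds, up to a positive normalization depending only on $\dim E_\pi$, to composition of endomorphisms on the right. The commutativity of $C^{\#}(U)$ established in Step 1 thus forces each factor $\mathrm{End}(E_\pi^K)$ to be commutative, which occurs only when $\dim E_\pi^K\leq 1$.

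The main obstacle is structural rather than computational: one must track the Peter--Weyl identification carefully enough to verify that convolution of $K$-bi-invariant functions really corresponds to composition of endomorphisms on the spherical components $E_\pi^K$. Once that correspondence is in hand the Gelfand pair conclusion is immediate, and the Cartan involution argument in Step 1 is elementary.
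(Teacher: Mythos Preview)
Your proof is correct and follows essentially the same route as the paper: both first establish that $(U,K)$ is a Gelfand pair from the symmetric-space structure, and then deduce $\dim E_\pi^K\leq 1$ from the commutativity of the bi-invariant convolution algebra. The only difference is that the paper dispatches both steps by citation (Wolf for the first, van Dijk for the second), whereas you supply the standard arguments---the Cartan-involution/anti-automorphism trick and the Peter--Weyl identification of the bi-invariant algebra with $\bigoplus_\pi\mathrm{End}(E_\pi^K)$---explicitly.
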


\begin{proof}
Since $U/K$ is a symmetric space, the pair $\left(  U,K\right)  $ is a Gelfand
pair (see for example \cite{Wolf}, Corollary 8.1.4). The Lemma follows from
Proposition $6.3.1$ of \cite{Dijk} which says that $\left(  U,K\right)  $ is a
Gelfand pair if and only if the space $E_{\pi}^{K}$ of $K$-fixed vectors is at
most one-dimensional.
\end{proof}

\begin{lemma}
\label{lem-mu} If $\dim E_{\pi}^{K}=0,$ then $\widehat{\mu}_{a}\left(
\pi\right)  =0.$
\end{lemma}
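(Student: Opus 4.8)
The plan is to prove Lemma \ref{lem-mu}, namely that $\widehat{\mu}_a(\pi) = 0$ whenever $\dim E_\pi^K = 0$. The key observation is that the operator $\widehat{\mu}_a(\pi) \in \mathrm{End}(E_\pi)$ inherits the $K$-bi-invariance of the measure $\mu_a$, and this forces it to factor through the space of $K$-fixed vectors, which is trivial by hypothesis.

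Concretely, I would first record the intertwining properties. For any $k \in K$, using the left-invariance $\mu_a(k_1 \cdot) = \mu_a$ together with the change of variables $g \mapsto kg$ in the defining integral
\[
\widehat{\mu}_a(\pi)(X) = \int_U \pi(g^{-1}) X \, \mathrm{d}\mu_a(g),
\]
one checks that $\pi(k)\,\widehat{\mu}_a(\pi) = \widehat{\mu}_a(\pi)$ for all $k \in K$; that is, the image of $\widehat{\mu}_a(\pi)$ lies in $E_\pi^K$. Symmetrically, the right-invariance $\mu_a(\cdot\, k_2) = \mu_a$ gives $\widehat{\mu}_a(\pi)\,\pi(k) = \widehat{\mu}_a(\pi)$ for all $k \in K$, so $\widehat{\mu}_a(\pi)$ annihilates the orthogonal complement of $E_\pi^K$ and depends only on the component of its argument in $E_\pi^K$. (Equivalently, $\widehat{\mu}_a(\pi) = P_K\, \widehat{\mu}_a(\pi)\, P_K$, where $P_K$ is the orthogonal projection onto $E_\pi^K$.) Since $\dim E_\pi^K = 0$, we have $P_K = 0$, hence $\widehat{\mu}_a(\pi) = 0$.

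The only mild subtlety is bookkeeping with the direction of the group action: because the integrand involves $\pi(g^{-1})$, one must be careful that the left/right invariances of $\mu_a$ translate into left/right multiplications by $\pi(k)$ on $\widehat{\mu}_a(\pi)$ in the claimed way (one of them passes through an inverse, but since $K$ is a group this is harmless). This is routine and I do not expect it to be a genuine obstacle; the statement is essentially the standard fact that the Fourier transform of a $K$-bi-invariant measure is a $K$-bi-invariant endomorphism, which must vanish when there are no nonzero $K$-fixed vectors. No serious difficulty arises here.
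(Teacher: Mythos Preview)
Your proposal is correct and follows essentially the same route as the paper: use the $K$-invariance of $\mu_a$ to show that the image of $\widehat{\mu}_a(\pi)$ lies in $E_\pi^K$, and then conclude from $E_\pi^K=\{0\}$. The paper carries out only the left-side verification $\pi(k)\widehat{\mu}_a(\pi)=\widehat{\mu}_a(\pi)$ via the explicit double integral over $K\times K$, which already suffices; your additional remark that $\widehat{\mu}_a(\pi)=P_K\,\widehat{\mu}_a(\pi)\,P_K$ is correct but not needed, and your caveat about the left/right bookkeeping (right-invariance of $\mu_a$ is what actually yields $\pi(k)\widehat{\mu}_a(\pi)=\widehat{\mu}_a(\pi)$, since the integrand is $\pi(g^{-1})$) is well taken and harmless.
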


\begin{proof}
From the definition of $\mu_{a}$, we get the equalities
\begin{align*}
\widehat{\mu_{a}}\left(  \pi\right)  \left(  X\right)   &  ={\int_{U}}%
\pi\left(  g^{-1}\right)  \left(  X\right)  \mathrm{d}\mu_{a}\left(  g\right)
\\
&  ={\int_{K\times K}}\pi\left(  \left(  k_{1}ak_{2}\right)  ^{-1}\right)
\left(  X\right)  \mathrm{d}k_{1}\mathrm{d}k_{2}\\
&  ={\int_{K\times K}}\pi\left(  k_{2}^{-1}\right)  \pi\left(  a^{-1}\right)
\pi\left(  k_{1}^{-1}\right)  \left(  X\right)  \mathrm{d}k_{1}\mathrm{d}%
k_{2}.
\end{align*}
Then we obtain
\begin{align*}
\pi\left(  k\right)  \widehat{\mu}_{a}\left(  \pi\right)  \left(  X\right)
&  =\pi\left(  k\right)  {\int_{U}}\pi\left(  g^{-1}\right)  \left(  X\right)
\mathrm{d}\mu_{a}\left(  g\right) \\
&  =\pi\left(  k\right)  {\int_{K\times K}}\pi\left(  k_{2}^{-1}\right)
\pi\left(  a^{-1}\right)  \pi\left(  k_{1}^{-1}\right)  \left(  X\right)
\mathrm{d}k_{1}\mathrm{d}k_{2}\\
&  =\pi\left(  k\right)  {\int_{K}}\pi\left(  k_{2}^{-1}\right)  \pi\left(
a^{-1}\right)  \left(  {\int_{K}}\pi\left(  k_{1}^{-1}\right)  \left(
X\right)  \mathrm{d}k_{1}\right)  \mathrm{d}k_{2}\\
&  ={\int_{K}}\pi\left(  \left(  k_{2}k^{-1}\right)  ^{-1}\right)  \pi\left(
a^{-1}\right)  \left(  {\int_{K}}\pi\left(  k_{1}^{-1}\right)  \left(
X\right)  \mathrm{d}k_{1}\right)  \mathrm{d}k_{2}\\
&  ={\int_{K}}\pi\left(  k_{2}^{-1}\right)  \pi\left(  a^{-1}\right)  \left(
{\int_{K}}\pi\left(  k_{1}^{-1}\right)  \left(  X\right)  \mathrm{d}%
k_{1}\right)  \mathrm{d}k_{2}\\
&  ={\int_{K\times K}}\pi\left(  k_{2}^{-1}\right)  \pi\left(  a^{-1}\right)
\pi\left(  k_{1}^{-1}\right)  \left(  X\right)  \mathrm{d}k_{1}\mathrm{d}%
k_{2},\\
&  =\widehat{\mu}_{a}\left(  \pi\right)  \left(  X\right)  .
\end{align*}
Therefore $\widehat{\mu}_{a}\left(  \pi\right)  \left(  X\right)  \in E_{\pi
}^{K}.$ The Lemma follows from the assumption $E_{\pi}^{K}=\left\{  0\right\}
. $
\end{proof}

\begin{lemma}
\label{lem234} 
Let $\left(  \pi,E_{\pi}\right)  $ be a spherical
representation, i.e., $\dim E_{\pi}^{K}=1$, $X_{\pi}$ be a basis for $E_{\pi
}^{K}$ with $\left\Vert X_{\pi}\right\Vert =1$ and let $\left\{  X_{1}%
,\dots,X_{d_{\pi}}\right\}  $ be an orthonormal basis of $E_{\pi}$ with
$X_{1}=X_{\pi}.$ Then for every positive integer $k$,
\[
\left(  \widehat{\mu_{a}^{k}}\left(  \pi\right)  X_{i},X_{j}\right)  =0\text{
for all pairs }\left(  i,j\right)  \neq\left(  1,1\right)  .
\]

\end{lemma}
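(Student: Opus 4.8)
The plan is to show that, for a spherical $\pi$, the endomorphism $\widehat{\mu_a^k}(\pi)$ is a scalar multiple of the (rank one) orthogonal projection $P_\pi$ of $E_\pi$ onto the line $E_\pi^K=\mathbb{C}X_\pi$. Granting this, the vanishing of $\big(\widehat{\mu_a^k}(\pi)X_i,X_j\big)$ for $(i,j)\neq(1,1)$ is immediate from $X_1=X_\pi$ and the orthonormality of $X_1,\dots,X_{d_\pi}$.

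First I would note that the $k$-fold convolution $\mu_a^k$ is again $K$-bi-invariant: writing $\mu_a^k=\mu_a\ast\mu_a^{k-1}=\mu_a^{k-1}\ast\mu_a$, left $K$-invariance is inherited from the leftmost factor and right $K$-invariance from the rightmost one, so the claim follows by an immediate induction on $k$. Hence it is enough to prove that $\widehat{\nu}(\pi)=P_\pi\,\widehat{\nu}(\pi)\,P_\pi$ for an arbitrary $K$-bi-invariant measure $\nu$ on $U$, where $P_\pi=\int_K\pi(k)\,\mathrm{d}m_K(k)$ is the orthogonal projection of $E_\pi$ onto $E_\pi^K$.

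This identity is obtained by rerunning the computation in the proof of Lemma \ref{lem-mu}. Right $K$-invariance of $\nu$ gives $\pi(k)\,\widehat{\nu}(\pi)(X)=\widehat{\nu}(\pi)(X)$ for every $k\in K$ and every $X\in E_\pi$; thus the image of $\widehat{\nu}(\pi)$ is contained in $E_\pi^K$, which reads $P_\pi\widehat{\nu}(\pi)=\widehat{\nu}(\pi)$. Symmetrically, left $K$-invariance of $\nu$ gives $\widehat{\nu}(\pi)\big(\pi(k)X\big)=\widehat{\nu}(\pi)(X)$ for every $k\in K$; averaging over $k\in K$ yields $\widehat{\nu}(\pi)P_\pi=\widehat{\nu}(\pi)$, i.e. $(E_\pi^K)^\perp\subseteq\ker\widehat{\nu}(\pi)$. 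Combining the two gives $\widehat{\nu}(\pi)=P_\pi\widehat{\nu}(\pi)P_\pi$, as wanted.

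Since $\pi$ is spherical, $\dim E_\pi^K=1$ and $P_\pi(Y)=(Y,X_\pi)X_\pi$, so $\widehat{\nu}(\pi)$ is necessarily of the form $c_\nu P_\pi$ with $c_\nu=\big(\widehat{\nu}(\pi)X_\pi,X_\pi\big)\in\mathbb{C}$. Taking $\nu=\mu_a^k$ and setting $c_k:=\big(\widehat{\mu_a^k}(\pi)X_\pi,X_\pi\big)$, we obtain for all $i,j$
\[
\big(\widehat{\mu_a^k}(\pi)X_i,X_j\big)=c_k\big(P_\pi X_i,X_j\big)=c_k\,(X_i,X_\pi)(X_\pi,X_j)=c_k\,\delta_{i1}\delta_{j1},
\]
which vanishes unless $(i,j)=(1,1)$. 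The only point needing a little care is keeping straight which of the two one-sided $K$-invariances controls the image of $\widehat{\nu}(\pi)$ and which controls its kernel; beyond that, the argument is a routine rank-one reduction and presents no real obstacle.
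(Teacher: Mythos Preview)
Your argument is correct and follows essentially the same route as the paper's: both establish that the image of $\widehat{\mu_a^k}(\pi)$ lies in $E_\pi^K$ and that $(E_\pi^K)^\perp$ lies in its kernel, so the matrix in the basis $\{X_1,\dots,X_{d_\pi}\}$ has at most the $(1,1)$ entry nonzero. The only organizational difference is that you apply the bi-invariance argument directly to $\nu=\mu_a^k$, packaging the conclusion as $\widehat{\nu}(\pi)=P_\pi\widehat{\nu}(\pi)P_\pi=c_\nu P_\pi$, whereas the paper first proves the image and kernel containments for $\widehat{\mu_a}(\pi)$ itself and then uses $\widehat{\mu_a^k}(\pi)=\widehat{\mu_a}(\pi)^k$.
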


\begin{proof}
From the properties of the Fourier Transform, we have
\[
\widehat{\mu_{a}^{k}}\left(  \pi\right)  =\widehat{\mu}_{a}\left(  \pi\right)
...\widehat{\mu}_{a}\left(  \pi\right)  \text{ \ \ (}k\text{ times).}%
\]
As in the proof of Lemma \ref{lem-mu}, we have%
\[
\pi\left(  k\right)  \widehat{\mu}_{a}\left(  \pi\right)  \left(  X\right)
=\widehat{\mu}_{a}\left(  \pi\right)  \left(  X\right)  \text{ for all
}k\text{ in }K\text{ and all }X\in E_{\pi},
\]
i.e.,
\[
\widehat{\mu}_{a}\left(  \pi\right)  \left(  X\right)  \in E_{\pi}%
^{K}=\mathsf{span}\left\{  X_{1}\right\}  \text{ for all }X\text{ in }E_{\pi
}\text{.}%
\]

Thus
\[
\widehat{\mu_{a}^{k}}\left(  \pi\right)  \left(  X\right)  \in E_{\pi}%
^{K}=\mathsf{span}\left\{  X_{1}\right\}  \text{ for all }X\text{ in }E_{\pi
}.
\]
Let $k$ in $K$, $X$ in $E_{\pi}$ and let $P\left(  X\right)  ={\int_{K}}%
\pi\left(  h^{-1}\right)  \left(  X\right)  \mathrm{d}h$. Then
\begin{align*}
\pi\left(  k\right)  P\left(  X\right)   &  ={\int_{K}}\pi\left(  k\right)
\pi\left(  h^{-1}\right)  \left(  X\right)  \mathrm{d}h\\
&  ={\int_{K}}\pi\left(  \left(  hk^{-1}\right)  ^{-1}\right)  \left(
X\right)  \mathrm{d}h\\
&  =P\left(  X\right)  .
\end{align*}
Hence
\[
P\left(  X\right)  \in E_{\pi}^{K}\text{ for all }X\in E_{\pi}.
\]
From $P\left(  X_{i}\right)  =0$ for $i\geq2$, we deduce that
\begin{align*}
\widehat{\mu_{a}^{k}}\left(  \pi\right)  \left(  X_{i}\right)   &
=\widehat{\mu_{a}^{k-1}}\left(  \pi\right)  \left(  \widehat{\mu}_{a}\left(
\pi\right)  \left(  X_{i}\right)  \right) \\
&  =\widehat{\mu_{a}^{k-1}}\left(  \pi\right)  \left(  {\textstyle\int_{K}}
\pi\left(  k_{2}^{-1}\right)  \pi\left(  a^{-1}\right)  P\left(  X_{i}\right)
\mathrm{d}k_{2}\right) \\
&  =0
\end{align*}
since $P\left(  X_{i}\right)  =0$ for $i \geq2$. Therefore we obtain
\[
\left(  \widehat{\mu_{a}^{k}}\left(  \pi\right)  X_{i},X_{j}\right)  =0\text{
for all }i\text{ and for all }j\neq1.
\]
Hence
\[
\left(  \widehat{\mu_{a}^{k}}\left(  \pi\right)  X_{i},X_{j}\right)  =0\text{
for all }\left(  i,j\right)  \neq\left(  1,1\right)  .
\]
\end{proof}

Here is our final preparatory lemma.

\begin{lemma}
\label{lemrrt} With the preceding notation, one has
\[
\left(  \widehat{\mu_{a}^{p}}\left(  \pi\right)  X_{1}, X_{1}\right)  =
\left(  \pi\left(  a^{-1}\right)  X_{1},X_{1}\right)  ^{p}.
\]

\end{lemma}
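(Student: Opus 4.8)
The plan is to exploit the two facts already used in the proof of Lemma~\ref{lem234}: first, that the Fourier transform turns convolution into composition of operators, so that $\widehat{\mu_{a}^{p}}\left(\pi\right)=\widehat{\mu}_{a}\left(\pi\right)\circ\cdots\circ\widehat{\mu}_{a}\left(\pi\right)$ ($p$ times); and second, that each operator $\widehat{\mu_{a}^{k}}\left(\pi\right)$ sends all of $E_{\pi}$ into the line $E_{\pi}^{K}=\mathsf{span}\{X_{1}\}$ and annihilates $X_{2},\dots,X_{d_{\pi}}$. Setting $\lambda_{k}:=\left(\widehat{\mu_{a}^{k}}\left(\pi\right)X_{1},X_{1}\right)$, these two facts say in particular that $\widehat{\mu_{a}^{k}}\left(\pi\right)X_{1}\in E_{\pi}^{K}$, hence $\widehat{\mu_{a}^{k}}\left(\pi\right)X_{1}=\lambda_{k}X_{1}$ (take the inner product with $X_{1}$ and use $\Vert X_{1}\Vert=1$).

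First I would compute $\lambda_{1}$. From the definitions of $\mu_{a}$ and of the Fourier transform,
\[
\lambda_{1}=\left(\widehat{\mu}_{a}\left(\pi\right)X_{1},X_{1}\right)=\int_{K}\int_{K}\left(\pi\left(k_{2}^{-1}\right)\pi\left(a^{-1}\right)\pi\left(k_{1}^{-1}\right)X_{1},X_{1}\right)\mathrm{d}m_{K}(k_{1})\,\mathrm{d}m_{K}(k_{2}).
\]
Since $X_{1}=X_{\pi}$ lies in $E_{\pi}^{K}$, we have $\pi\left(k_{1}^{-1}\right)X_{1}=X_{1}$; and since $\pi$ is unitary and $X_{1}$ is $K$-fixed, $\left(\pi\left(k_{2}^{-1}\right)v,X_{1}\right)=\left(v,\pi\left(k_{2}\right)X_{1}\right)=\left(v,X_{1}\right)$ for every $v\in E_{\pi}$. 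Hence the integrand is the constant $\left(\pi\left(a^{-1}\right)X_{1},X_{1}\right)$, and integrating over $K\times K$ against the normalized Haar measure yields $\lambda_{1}=\left(\pi\left(a^{-1}\right)X_{1},X_{1}\right)$.

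Finally I would run the induction on $p$. Using $\widehat{\mu_{a}^{p}}\left(\pi\right)=\widehat{\mu_{a}^{p-1}}\left(\pi\right)\circ\widehat{\mu}_{a}\left(\pi\right)$ together with $\widehat{\mu}_{a}\left(\pi\right)X_{1}=\lambda_{1}X_{1}$,
\[
\widehat{\mu_{a}^{p}}\left(\pi\right)X_{1}=\widehat{\mu_{a}^{p-1}}\left(\pi\right)\left(\lambda_{1}X_{1}\right)=\lambda_{1}\,\widehat{\mu_{a}^{p-1}}\left(\pi\right)X_{1}=\lambda_{1}\lambda_{p-1}X_{1},
\]
so $\lambda_{p}=\lambda_{1}\lambda_{p-1}$, and by induction $\lambda_{p}=\lambda_{1}^{p}=\left(\pi\left(a^{-1}\right)X_{1},X_{1}\right)^{p}$, which is precisely the assertion. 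The argument is essentially bookkeeping; the only point requiring a little care is the composition (rather than product) convention for the Fourier transform of a convolution and the order in which the factors are composed, but since all the action takes place on the one-dimensional space $E_{\pi}^{K}$ this ordering is immaterial here.
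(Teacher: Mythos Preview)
Your proof is correct and follows essentially the same approach as the paper: compute the base case $(\widehat{\mu_a}(\pi)X_1,X_1)=(\pi(a^{-1})X_1,X_1)$ using $K$-invariance of $X_1$, then use $\widehat{\mu_a^p}(\pi)=\widehat{\mu_a}(\pi)^p$ together with the fact that the operator acts as a scalar on the line $E_\pi^K$. The only cosmetic differences are that the paper phrases the second step via the matrix of $\widehat{\mu_a^p}(\pi)$ (invoking Lemma~\ref{lem234}) rather than your explicit induction, and computes the base case through the projection $P$ and $P^\ast=P$ rather than your direct appeal to unitarity; these are equivalent packagings of the same argument.
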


\begin{proof}
From the proof of Lemma \ref{lem-mu}, we have%
\begin{align*}
\widehat{\mu_{a}}\left(  \pi\right)  X_{1}  &  ={\int_{K}}\pi\left(
k_{2}^{-1}\right)  \pi\left(  a^{-1}\right)  \left(  {\int_{K}}\pi\left(
k_{1}^{-1}\right)  \left(  X_{1}\right)  \mathrm{d}k_{1}\right)
\mathrm{d}k_{2}\\
&  ={\int_{K}}\pi\left(  k_{2}^{-1}\right)  \pi\left(  a^{-1}\right)  P\left(
X_{1}\right)  \mathrm{d}k_{2}\\
&  ={\int_{K}}\pi\left(  k_{2}^{-1}\right)  \pi\left(  a^{-1}\right)  X_{1}
\mathrm{d}k_{2}
\end{align*}
since $P\left(  X_{1}\right)  =X_{1}$ as $X_{1}\in E_{\pi}^{K}$. It follows
$$
\widehat{\mu_{a}}\left(  \pi\right)  X_{1} =P\left(  \pi\left(  a^{-1}\right)  X_{1}\right)  .
$$
Hence%
\begin{align}
\left(  \widehat{\mu_{a}}\left(  \pi\right)  X_{1}, X_{1}\right)   &  =\left(
P\left(  \pi\left(  a^{-1}\right)  X_{1}\right)  , X_{1}\right) \nonumber\\
&  =\left(  \pi\left(  a^{-1}\right)  X_{1},P^{\ast}\left(  X_{1}\right)
\right) \nonumber\\
&  =\left(  \pi\left(  a^{-1}\right)  X_{1},X_{1}\right)  , \label{dsdsf}%
\end{align}
the last equality following from the fact that $P$ is the projection on
$\mathsf{span}\left\{  X_{1}\right\}  ,$ hence $P^{\ast}=P.$

By Lemma \ref{lem234}, in the orthonormal basis $\left\{  X_{1},...,X_{d_{\pi
}}\right\}  $ of $E_{\pi},$ we have
\[
\left(  \widehat{\mu_{a}^{p}}\left(  \pi\right)  X_{i},X_{j}\right)  =0\text{
for all }\left(  i,j\right)  \neq\left(  1,1\right)  ,
\]
hence the matrix associated to the endomorphism $\widehat{\mu_{a}^{p}}\left(
\pi\right)  ,$ denoted by $M\left(  \widehat{\mu_{a}^{p}}\left(  \pi\right)
\right)  ,$ is given by%
\[
M\left(  \widehat{\mu_{a}^{p}}\left(  \pi\right)  \right)  =\left(
\begin{array}
[c]{cccc}%
\left(  \widehat{\mu_{a}^{p}}\left(  \pi\right)  X_{1},X_{1}\right)  & 0 &
... & 0\\
0 & 0 & \dots & 0\\
\vdots & \vdots &  & \vdots\\
0 & 0 & ... & 0
\end{array}
\right)  .
\]
Since
\[
\widehat{\mu_{a}^{p}}\left(  \pi\right)  =\widehat{\mu_{a}}\left(  \pi\right)
...\widehat{\mu_{a}}\left(  \pi\right)  \text{ ($p$ times)},
\]
we infer
\[
M\left(  \widehat{\mu_{a}^{p}}\left(  \pi\right)  \right)  = \left(
\begin{array}
[c]{cccc}%
\left(  \widehat{\mu_{a}} \left(  \pi\right)  X_{1}, X_{1} \right)  ^{p} & 0 &
... & 0\\
0 & 0 & \dots & 0\\
\vdots & \vdots &  & \vdots\\
0 & 0 & ... & 0
\end{array}
\right)  .
\]
Hence
\begin{align*}
\left(  \widehat{\mu_{a}^{p}}\left(  \pi\right)  X_{1}, X_{1}\right)   &
=\left(  \widehat{\mu_{a}}\left(  \pi\right)  X_{1}, X_{1}\right)  ^{p}\\
&  = \left(  \pi\left(  a^{-1}\right)  X_{1}, X_{1}\right)  ^{p}%
\end{align*}
using \eqref{dsdsf}, which proves our claim.
\end{proof}

We are now ready to give the proof of our main proposition of this section.

\begin{proof}
[Proof of Proposition \ref{prop1}]We use the notations of Lemma \ref{lem234}.
One has
\begin{eqnarray*}
\left\Vert \widehat{\mu^{\left(  p\right)  }}\left(  \pi\right)  \right\Vert
_{\mathrm{HS}}^{2}  &  = & \sum\limits_{i=1}^{\dim E_{\pi}}\left\Vert
\widehat{\mu_{a}^{p}}\left(  \pi\right)  X_{i}\right\Vert ^{2}  \\
&  = &\sum\limits_{i=1}^{\dim E_{\pi}}\sum\limits_{j=1}^{\dim E_{\pi}}\left\vert
\left(  \widehat{\mu_{a}^{p}}\left(  \pi\right)  X_{i},X_{j}\right)
\right\vert ^{2} \\
&  =& \left\vert \left(  \widehat{\mu_{a}^{p}}\left(  \pi\right)  X_{1},X_{1}\right)  \right\vert ^{2}\\
&  = & \left\vert \left(  \pi\left(  a^{-1}\right)  X_{1},X_{1}\right)
\right\vert ^{2p}
\end{eqnarray*}
by Lemma \ref{lem234} and then Lemma \ref{lemrrt}. Using the fact that $\pi$ is unitary, we deduce that
$$
\left\Vert \widehat{\mu^{\left(  p\right)  }}\left(  \pi\right)  \right\Vert
_{\mathrm{HS}}^{2}
 =\left\vert \left(  \pi\left(  a\right)  X_{1},X_{1}\right)  \right\vert
^{2p}
$$
Hence
\begin{align*}
\left\Vert \widehat{\mu_{a}^{\left(  p\right)  }}\right\Vert _{\mathrm{2}%
}^{2}  &  =\sum\limits_{\substack{\left[  \pi\right]  \in\widehat{U}
\\\pi\text{ spherical}}}\left(  \dim E_{\pi}\right)  \left\Vert \widehat{\mu
^{\left(  p\right)  }}\left(  \pi\right)  \right\Vert _{\mathrm{HS}}^{2}\\
&  =\sum\limits_{\substack{\left[  \pi\right]  \in\widehat{U} \\\pi\text{
spherical}}}\left(  \dim E_{\pi}\right)  \left\vert \left(  \pi\left(
a\right)  X_{\pi},X_{\pi}\right)  \right\vert ^{2p},
\end{align*}
as announced in the statement of the Proposition.
\end{proof}

\section{The Case of $SU\left(  2\right)  /SO\left(  2\right)  $}
\label{sec4}

Let
\[
E_{n}=\mathsf{span}\left\{  z_{1}^{k}z_{2}^{n-k}\mid0\leq k\leq n\right\}
\]
the complex vector space of homogeneous polynomials in two variables. There is
a natural action of $SU\left(  2\right)  $ on $E_{n}$ as follows:
\[%
\begin{array}
[c]{cccc}%
\pi_{n}: & SU\left(  2\right)  & \longrightarrow & GL\left(  E_{n}\right) \\
& A & \longmapsto & \pi_{n}\left(  A\right)
\end{array}
\]
where
\[
\pi_{n}\left(  A\right)  \left(  P\left(  z_{1},z_{2}\right)  \right)
=P\left(  \left(  z_{1},z_{2}\right)  A\right)  .
\]
It can be shown that $\pi_{n}$ is irreducible and every irreducible
representation of $SU\left(  2\right)  $ is of that form (see \cite{Wolf},
Proposition 5.7.5). Therefore $\widehat{SU\left(  2\right)  }\simeq\mathbb{N}$
and the dimension of the representation corresponding to $n$ is $n+1.$

Consider the inner product in $E_{n}$ defined as follows:
\[
\left(  \sum\limits_{i=0}^{n}a_{i}z_{1}^{i}z_{2}^{n-i},\sum\limits_{j=0}%
^{n}b_{j}z_{1}^{j}z_{2}^{n-j}\right)  =\sum\limits_{k=0}^{n}k!\left(
n-k\right)  !a_{k}\overline{b}_{k}.
\]
With this product, the representation $\pi_{n}:SU\left(  2\right)  \rightarrow
GL\left(  E_{n}\right)  $ is unitary. Denote by $\left\vert \left\vert
.\right\vert \right\vert $ the corresponding norm.

The vector $X_{\pi_{2n}}=\left(  z_{1}^{2}+z_{2}^{2}\right)  ^{n}$ is
invariant under the action of $SO\left(  2\right)  .$ Hence $\left(  \pi
_{2n},E_{2n}\right)  $ is of class one. Conversely, every class one
irreducible representation of $SU\left(  2\right)  $ is of this form. For
simplicity, we put $X_{\pi_{2n}}=X_{\pi}$.

From
\[
X_{\pi}=\left(  z_{1}^{2}+z_{2}^{2}\right)  ^{n}=\sum\limits_{k=0}^{n}%
\binom{n}{k}z_{1}^{2k}z_{2}^{2\left(  n-k\right)  },
\]
we get
\begin{align*}
\left\vert \left\vert X_{\pi}\right\vert \right\vert ^{2}  &  =\left(
\sum\limits_{k=0}^{n}\binom{n}{k}z_{1}^{2k}z_{2}^{2\left(  n-k\right)  }%
,\sum\limits_{k=0}^{n}\binom{n}{k}z_{1}^{2k}z_{2}^{2\left(  n-k\right)
}\right) \\
&  =\sum\limits_{k=0}^{n}\left(  2k\right)  !\left(  2n-2k\right)  !\binom
{n}{k}^{2}.
\end{align*}
Let
\[
\widetilde{X}_{\pi}=\frac{X_{\pi}}{\left\vert \left\vert X_{\pi}\right\vert
\right\vert }\quad\text{ and }\quad a_{\vartheta}=\left(
\begin{array}
[c]{cc}%
e^{i\vartheta} & 0\\
0 & e^{-i\vartheta}%
\end{array}
\right)
\]
where $a$ is not in the normalizer of $SO\left(  2\right)  $ in $SU\left(
2\right)  .$

From now on, we shall use the notation already introduced $U=SU\left(
2\right)  $ and $K=SO\left(  2\right)  $. The proof of the following
proposition is straightforward.

\begin{proposition}
\label{prop12} 
Let $A=\exp\left(  i\mathfrak{a}\right)  $, $g=k_{1}ak_{2}\in
U, $ $k_{1},k_{2}\in K,$ $a\in A,$ and denote by $N_{A}\left(  K\right)  $ the
normalizer of $K$ in $A.$ Then

\begin{enumerate}
\item[(i)] $g\in N_{U}\left(  K\right)  $ if and only of $a\in N_{A}\left(
K\right)  $,

\item[(ii)] the normalizer $N_{A}(K)$ satisfies
\[
N_{A}\left(  K\right)  =\left\{  a_{\vartheta}=\left(
\begin{array}
[c]{cc}%
e^{i\vartheta} & 0\\
0 & e^{-i\vartheta}%
\end{array}
\right)  \mid e^{4i\vartheta}=1\right\}  .
\]

\end{enumerate}
\end{proposition}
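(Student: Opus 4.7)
For part (i), the plan is purely group-theoretic. Starting from $g = k_1 a k_2$ with $k_1,k_2 \in K$, I would compute
\[
gKg^{-1} = k_1 a k_2 K k_2^{-1} a^{-1} k_1^{-1} = k_1 (aKa^{-1}) k_1^{-1},
\]
using $k_2 K k_2^{-1} = K$ since $K$ is a subgroup. Then the equation $gKg^{-1} = K$ is equivalent, after conjugating both sides by $k_1^{-1}$, to $aKa^{-1} = K$. Since $a$ belongs to $A$ by hypothesis, this says precisely that $a \in N_A(K)$, establishing the claimed equivalence.

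For part (ii), I would work entirely in coordinates. A general element of $K = SO(2)$ sitting inside $SU(2)$ has the form $k_\phi = \begin{pmatrix}\cos\phi & \sin\phi \\ -\sin\phi & \cos\phi\end{pmatrix}$, and a general $a = a_\vartheta \in A$ is the diagonal matrix displayed in the statement. A direct multiplication gives
\[
a_\vartheta\, k_\phi\, a_\vartheta^{-1} = \begin{pmatrix}\cos\phi & e^{2i\vartheta}\sin\phi \\ -e^{-2i\vartheta}\sin\phi & \cos\phi\end{pmatrix}.
\]
For this matrix to lie in $SO(2)$ for every $\phi$, its off-diagonal entries must be real for every $\phi$, which forces $e^{2i\vartheta} \in \mathbb{R}$, i.e. $e^{2i\vartheta} = \pm 1$, i.e. $e^{4i\vartheta} = 1$. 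Conversely, if $e^{4i\vartheta} = 1$, then $e^{2i\vartheta} = \pm 1$ and the displayed matrix is either $k_\phi$ or $k_{-\phi}$, both of which lie in $K$; so $a_\vartheta \in N_A(K)$.

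I do not expect any serious obstacle: the argument for (i) is a single manipulation and that for (ii) is one two-by-two matrix multiplication followed by reading off a reality condition. The only modest care needed is in (ii), to verify both implications (the condition $e^{4i\vartheta}=1$ is necessary \emph{and} sufficient), which the explicit form above makes transparent.
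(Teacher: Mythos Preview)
Your proof is correct. The paper does not actually give a proof of this proposition; it simply states that ``the proof of the following proposition is straightforward'' and moves on. Your argument supplies exactly the routine details the authors had in mind: the group-theoretic reduction in (i) using $k_2 K k_2^{-1}=K$ and $k_1^{-1}K k_1=K$, and the explicit $2\times 2$ conjugation in (ii) leading to the reality condition $e^{2i\vartheta}\in\mathbb{R}$, i.e.\ $e^{4i\vartheta}=1$.
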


Therefore the element $a_{\pi/2}=\left(
\begin{array}
[c]{cc}%
i & 0\\
0 & -i
\end{array}
\right)  $ is clearly in the normalizer of $K$ in $U$ and for all
$\vartheta\in\left(  0,\frac{\pi}{2}\right)  ,$ $a_{\theta}\notin N_{A}\left(
K\right)  .$ In the light of that fact, in all what follows, we will assume
that $\vartheta\in\left(  0,\frac{\pi}{2}\right)  .$

Put%
\[
\varphi_{2n}\left(  a_{\vartheta}\right)  :=\left(  \pi_{2n}\left(
a_{\vartheta}\right)  \widetilde{X}_{\pi},\widetilde{X}_{\pi}\right)  .
\]
Since
\begin{align*}
\left(  \pi_{2n}\left(  a_{\vartheta}\right)  \widetilde{X}_{\pi}\right)
\left(  z\right)   &  =\frac{1}{\left\vert \left\vert X_{\pi}\right\vert
\right\vert }\sum\limits_{k=0}^{n}\binom{n}{k}\pi_{2n}\left(  a_{\vartheta
}\right)  \left(  z_{1}^{2k}z_{2}^{2n-2k}\right) \\
& \\
&  =\frac{1}{\left\vert \left\vert X_{\pi}\right\vert \right\vert }%
\sum\limits_{k=0}^{n}\binom{n}{k}e^{i\vartheta\left(  4k-2n\right)  }%
z_{1}^{2k}z_{2}^{2n-2k},
\end{align*}
we deduce that
\begin{align*}
\varphi_{2n}\left(  a_{\vartheta}\right)   &  =\frac{1}{\left\vert \left\vert
X_{\pi}\right\vert \right\vert ^{2}}\left(  \sum\limits_{k=0}^{n}\binom{n}%
{k}e^{i\vartheta\left(  4k-2n\right)  }z_{1}^{2k}z_{2}^{2n-2k},\sum
\limits_{k=0}^{n}\binom{n}{k}z_{1}^{2k}z_{2}^{2\left(  n-k\right)  }\right) \\
& \\
&  =\frac{\sum\limits_{k=0}^{n}\left(  2k\right)  !\left(  2n-2k\right)
!\binom{n}{k}^{2}e^{i\vartheta\left(  4k-2n\right)  }}{\sum\limits_{k=0}%
^{n}\left(  2k\right)  !\left(  2n-2k\right)  !\binom{n}{k}^{2}}.
\end{align*}
Therefore, applying Proposition \ref{prop1}, we get
\begin{align}
\left\Vert \widehat{\mu}_{a_{\vartheta}}^{\left(  p\right)  }\right\Vert _{2}^2
&  ={\sum\limits_{n=1}^{\infty}}\left(  2n+1\right)  \left\vert \varphi
_{2n}\left(  a_{\vartheta}\right)  \right\vert ^{2p}\nonumber\\
&  ={\sum\limits_{n=1}^{\infty}}\left(  2n+1\right)  \left\vert \frac
{\sum\limits_{k=0}^{n}\left(  2k\right)  !\left(  2n-2k\right)  !\binom{n}%
{k}^{2}e^{i\vartheta\left(  4k-2n\right)  }}{\sum\limits_{k=0}^{n}\left(
2k\right)  !\left(  2n-2k\right)  !\binom{n}{k}^{2}}\right\vert ^{2p}%
\nonumber\\
&  =\sum\limits_{n=1}^{\infty}\left(  2n+1\right)  \left\vert \frac
{\sum\limits_{k=0}^{n}\left(  2k\right)  !\left(  2n-2k\right)  !\binom{n}%
{k}^{2}e^{4ik\vartheta}}{\sum\limits_{k=0}^{n}\left(  2k\right)  !\left(
2n-2k\right)  !\binom{n}{k}^{2}}\right\vert ^{2p}. \label{eq2}%
\end{align}

\section{Proof of The Main Theorem $(i)$}
\label{sec5}

To study the convergence of the series \eqref{eq2}, we first need the
following easy lemma, which will elucidate the behavior of its denominator.

\begin{lemma}
\label{lem1} 
One has
\[
\sum_{k=0}^{n}\binom{2k}{k}\binom{2n-2k}{n-k}=4^{n}.
\]

\end{lemma}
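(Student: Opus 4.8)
The plan is to prove this identity via generating functions. The key input is the classical expansion
\[
\sum_{k=0}^{\infty}\binom{2k}{k}x^{k}=\frac{1}{\sqrt{1-4x}},
\]
valid for $|x|<1/4$, which follows from Newton's generalized binomial theorem applied to $(1-4x)^{-1/2}$ together with the elementary evaluation $\binom{-1/2}{k}(-4)^{k}=\binom{2k}{k}$.

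Squaring this identity and using the Cauchy product for the product of two power series, the left-hand side becomes $(1-4x)^{-1}=\sum_{n\geq 0}4^{n}x^{n}$, while the right-hand side becomes
\[
\sum_{n=0}^{\infty}\left(\sum_{k=0}^{n}\binom{2k}{k}\binom{2n-2k}{n-k}\right)x^{n}.
\]
Comparing the coefficients of $x^{n}$ on both sides then yields $\sum_{k=0}^{n}\binom{2k}{k}\binom{2n-2k}{n-k}=4^{n}$, as desired. Since both series converge absolutely for $|x|<1/4$ the manipulation is legitimate; alternatively, one may work entirely in the ring of formal power series $\Q[[x]]$, where no convergence issue arises.

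There is essentially no hard step here: the only point requiring a short computation is the identity $\binom{-1/2}{k}(-4)^{k}=\binom{2k}{k}$, which one checks by writing $\binom{-1/2}{k}=\frac{(-1/2)(-3/2)\cdots(-(2k-1)/2)}{k!}=\frac{(-1)^{k}(2k-1)!!}{2^{k}k!}$ and multiplying by $(-4)^{k}=(-1)^{k}2^{2k}$ to obtain $\frac{(2k-1)!!\,2^{k}}{k!}=\frac{(2k)!}{k!\,k!}$. Should one wish to avoid generating functions, the identity can instead be established by induction on $n$ using the recurrence for $\binom{2k}{k}$, or by a bijective argument interpreting $4^{n}=2^{2n}$ as the number of $\pm 1$ sequences of length $2n$ and splitting such a sequence at a suitable return time; but the generating-function argument is by far the most economical, and I would present that one.
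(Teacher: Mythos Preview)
Your proof is correct and follows essentially the same approach as the paper: both identify $\sum_{k\geq 0}\binom{2k}{k}x^{k}=(1-4x)^{-1/2}$, square it, and read off the coefficient of $x^{n}$ from $(1-4x)^{-1}=\sum_{n\geq 0}4^{n}x^{n}$. Your version is somewhat more detailed in justifying the binomial expansion and in mentioning alternative routes, but the argument is the same.
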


\begin{proof}
We use the method of generating functions. The generating function of the
sequence $\binom{2n}{n}$ is the function $\sqrt{1-4x}$, i.e. (say for a real
number $x$ such that $|x|< 1/4$)
\[
\frac{1}{\sqrt{1-4x}}={\sum\limits_{n=0}^{\infty}}\binom{2n}{n}x^{n}.
\]
Hence, by squaring,
\[
\frac{1}{1-4x} =\left(  {\sum\limits_{k=0}^{\infty}}\binom{2k}{k}
x^{k}\right)  \left(  {\sum\limits_{l=0}^{\infty}}\binom{2l}{l}x^{l}\right)
={\sum\limits_{n=0}^{\infty}}\left(  {\sum\limits_{k=0}^{n}}\binom{2k}
{k}\binom{2n-2k}{n-k}\right)  x^{n}.
\]
The Lemma follows from the identity
\[
\frac{1}{1-4x}={\sum\limits_{n=0}^{\infty}}4^{n}x^{n}.
\]

\end{proof}

To study the numerator appearing in the terms of the series \eqref{eq2}, we
introduce for a real number $\vartheta$ (non-zero modulo $\pi/2$) and any
integer $n\geq1$, the sum
\begin{equation}
t_{n}\left(  \vartheta\right)  =\sum\limits_{k=0}^{n}\binom{2k}{k}%
\binom{2n-2k}{n-k}\exp\left(  4ik\vartheta\right)  \label{eq01}%
\end{equation}
and investigate this quantity. Note first that $t_{n}$ is periodic with period
$\pi/2$.

\begin{lemma}
\label{lem2} 
Let $\vartheta$ be a non-zero (modulo $\pi/2$) real number.
There is a positive real number $C\left(  \vartheta\right)  $ such that for
any positive integer $n$, one has
\[
\left\vert t_{n}\left(  \vartheta\right)  \right\vert \leq C\left(
\vartheta\right)  \frac{4^{n}}{\sqrt{n}}.
\]
\end{lemma}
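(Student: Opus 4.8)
The plan is to estimate the exponential sum $t_n(\vartheta)=\sum_{k=0}^n \binom{2k}{k}\binom{2n-2k}{n-k}e^{4ik\vartheta}$ by recognizing it as a coefficient extraction from the product of two generating functions, exactly as in Lemma \ref{lem1}, but now with a twist by the unimodular parameter $q:=e^{4i\vartheta}$. Writing $f(x)=\sum_{m\ge0}\binom{2m}{m}x^m=(1-4x)^{-1/2}$, one has
\[
\sum_{n\ge0} t_n(\vartheta)\,x^n=f(qx)\,f(x)=\frac{1}{\sqrt{1-4qx}\,\sqrt{1-4x}},
\]
so $t_n(\vartheta)$ is the $n$-th Taylor coefficient of $g(x):=\bigl((1-4qx)(1-4x)\bigr)^{-1/2}$. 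The singularities of $g$ nearest the origin are at $x=1/4$ and $x=q^{-1}/4=e^{-4i\vartheta}/4$, both on the circle $|x|=1/4$; since $\vartheta\not\equiv0\pmod{\pi/2}$ they are \emph{distinct} square-root branch points. The expected growth rate $4^n/\sqrt n$ is precisely what two coalescing-free square-root singularities on the circle of radius $1/4$ produce, so the statement is exactly of singularity-analysis type.

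First I would make this rigorous by a contour-integral (Cauchy) estimate rather than invoking a transfer theorem verbatim. Write
\[
t_n(\vartheta)=\frac{1}{2\pi i}\oint_{|x|=r}\frac{g(x)}{x^{n+1}}\,\mathrm{d}x
\]
and push the radius $r$ slightly past $1/4$, deforming the contour into a path that encircles the two branch cuts issuing from $1/4$ and from $e^{-4i\vartheta}/4$. Near $x=1/4$ one factors $g(x)=(1-4x)^{-1/2}h_1(x)$ with $h_1(x)=(1-4qx)^{-1/2}$ analytic and bounded in a neighbourhood (here one uses $q\ne1$, i.e. $1-4q\cdot\tfrac14\ne0$), so the contribution of that branch point is, up to a bounded factor, that of $(1-4x)^{-1/2}$, whose $n$-th coefficient is $\binom{2n}{n}4^{-n}\cdot 4^n=\binom{2n}{n}\sim 4^n/\sqrt{\pi n}$ — wait, more carefully: the coefficient of $(1-4x)^{-1/2}$ is $\binom{2n}{n}$, which is $\Theta(4^n/\sqrt n)$. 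Symmetrically, near $x=e^{-4i\vartheta}/4$ one writes $g(x)=(1-4qx)^{-1/2}h_2(x)$ with $h_2$ bounded there (using $q\ne q^{-1}$, i.e. $e^{8i\vartheta}\ne1$, which again is our hypothesis), giving a contribution of the same order $O(4^n/\sqrt n)$. The remaining part of the deformed contour stays at distance $\ge\delta(\vartheta)>0$ from both singularities, so on it $|g(x)|\le M(\vartheta)$ and $|x|\ge 1/4$, contributing $O\bigl(M(\vartheta)(1/4)^{-n}\bigr)=O(4^n)$, which can even be improved by taking the outer contour at radius $1/4+\varepsilon$, but $O(4^n)$ already needs to be absorbed — no, this is too crude; instead one keeps the contour \emph{on} $|x|=1/4$ away from the two arcs near the singularities and wraps tightly around the cuts, so the "away" part is $O(4^n\cdot(\text{arc length}))$ and the dominant terms are the two local Hankel-type integrals, each $\Theta(4^n/\sqrt n)$. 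Collecting everything yields $|t_n(\vartheta)|\le C(\vartheta)\,4^n/\sqrt n$, with $C(\vartheta)$ depending on $\delta(\vartheta)$ and on $\sup|h_i|$.

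The main obstacle is uniformity and the explicit shape of the bound: the constant $C(\vartheta)$ blows up as $\vartheta\to0$ or $\vartheta\to\pi/2$ (the two branch points collide and the growth genuinely becomes $4^n$ rather than $4^n/\sqrt n$), so one must be careful that all estimates are stated for \emph{fixed} $\vartheta\not\equiv0$ and that the local analysis near each branch point uses only the boundedness of the \emph{other} factor on a fixed small disc. A cleaner alternative that avoids contour manipulations altogether is the Cauchy–Schwarz / convolution route: since $t_n(\vartheta)=\sum_{k=0}^n a_k a_{n-k}q^k$ with $a_k=\binom{2k}{k}$ and $\sum a_k a_{n-k}=4^n$, one has trivially $|t_n(\vartheta)|\le 4^n$, and to gain the $\sqrt n$ one exploits cancellation: split the sum at the midpoint, use summation by parts in $k$ against the geometric-type weight $q^k$ (whose partial sums are bounded by $|1-q|^{-1}=|2\sin 2\vartheta|^{-1}$ since $q\ne1$), and use that the increments $a_k a_{n-k}-a_{k-1}a_{n-k+1}$ are $O(4^n/n^{3/2})$ by the asymptotics $a_k=\binom{2k}{k}=\frac{4^k}{\sqrt{\pi k}}(1+O(1/k))$. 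Summing $n$ increments of size $O(4^n/n^{3/2})$ gives $O(4^n/\sqrt n)$, and the boundary terms from summation by parts are $O(4^n/\sqrt n)\cdot|1-q|^{-1}$, which is exactly the claimed bound with $C(\vartheta)$ proportional to $1/|\sin 2\vartheta|$ plus an absolute constant. I would present this second argument, as it is elementary, self-contained, and makes the dependence on $\vartheta$ transparent; the delicate point to execute carefully is the two-sided nature of the convolution (both $a_k$ and $a_{n-k}$ vary), which is handled by performing the Abel summation on the "slowly varying" half $k\le n/2$ and symmetrically on $k\ge n/2$ after the substitution $k\mapsto n-k$ (which only multiplies $q^k$ by the unimodular constant $q^n$).
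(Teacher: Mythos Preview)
Your second approach---Abel summation after splitting at the midpoint and using the symmetry $k\mapsto n-k$---is exactly the paper's proof. The paper sets $v_k=\binom{2k}{k}\binom{2n-2k}{n-k}$, writes $t_n(\vartheta)=t_n^{(1)}(\vartheta)+e^{4in\vartheta}t_n^{(1)}(-\vartheta)$ (plus a harmless middle term $\binom{n}{n/2}^2 e^{2in\vartheta}=O(4^n/n)$ when $n$ is even), and applies one Abel transformation to $t_n^{(1)}$ using that the partial sums $u_k(\vartheta)=\sum_{j\le k}e^{4ij\vartheta}$ are bounded by $1/|\sin 2\vartheta|$.

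There is, however, a real error in your justification of the variation bound. You claim that each increment $v_k-v_{k-1}$ is $O(4^n/n^{3/2})$ via the Stirling asymptotics $\binom{2k}{k}\sim 4^k/\sqrt{\pi k}$. This fails for small $k$, where that asymptotic is unavailable: for instance
\[
v_0-v_1=\binom{2n}{n}-2\binom{2n-2}{n-1}=\frac{2(n-1)}{n}\binom{2n-2}{n-1}\asymp \frac{4^n}{\sqrt{n}},
\]
not $4^n/n^{3/2}$. Summing your claimed bound over $n$ terms therefore does not work. The paper's remedy is to observe instead that $(v_k)$ is \emph{monotone decreasing} on $0\le k\le n/2$; indeed
\[
v_k-v_{k+1}=2\binom{2k}{k}\binom{2(n-k-1)}{n-k-1}\,\frac{n-2k-1}{(n-k)(k+1)}>0.
\]
Then $\sum_{k<n/2}|v_k-v_{k+1}|$ telescopes to at most $v_0=\binom{2n}{n}\sim 4^n/\sqrt{\pi n}$, and Abel's inequality gives $|t_n^{(1)}(\vartheta)|\le c_0(\vartheta)\,v_0$ directly. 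So your skeleton is correct and matches the paper, but the step you flagged as ``delicate'' needs monotonicity and telescoping, not a pointwise increment estimate.

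Your first route via the generating function $g(x)=\bigl((1-4x)(1-4e^{4i\vartheta}x)\bigr)^{-1/2}$ is a genuinely different argument the paper does not pursue. It is viable through standard singularity analysis for algebraic singularities (two distinct square-root branch points on $|x|=1/4$), but, as you yourself noticed, the contribution of the arc of $|x|=1/4$ away from the singularities is only $O(4^n)$ unless one deforms to a Hankel-type contour at radius $>1/4$; carrying this out cleanly is heavier than the elementary Abel-summation proof.
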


\begin{proof}
We distinguish two cases depending on the parity of $n$. 
\medskip

\noindent Case a) : Let us assume first that $n$ is odd. The sum
\[
t_{n}\left(  \vartheta\right)  =\sum\limits_{k=0}^{n}\binom{2k}{k}%
\binom{2n-2k}{n-k}\exp\left(  4ik\vartheta\right)
\]
can be written as $t_{n}\left(  \vartheta\right)  =t_{n}^{(1)}\left(
\vartheta\right)  +t_{n}^{(2)}\left(  \vartheta\right)  $, where
\[
t_{n}^{(1)}\left(  \vartheta\right)  =\sum\limits_{k=0}^{(n-1)/2}\binom{2k}%
{k}\binom{2n-2k}{n-k}\exp\left(  4ik\vartheta\right)
\]
and
\[
t_{n}^{(2)}\left(  \vartheta\right)  =\sum\limits_{k=(n+1)/2}^{n}\binom{2k}%
{k}\binom{2n-2k}{n-k}\exp\left(  4ik\vartheta\right)  .
\]
By making the change of variable $j=n-k$ in $t_{n}^{(2)}(\vartheta),$ we get
\[
t_{n}^{(2)}\left(  \vartheta\right)  =\exp\left(  4in\vartheta\right)
t_{n}^{(1)}\left(  -\vartheta\right)
\]
which yields
\begin{equation}
\label{gggggg}t_{n}\left(  \vartheta\right)  =t_{n}^{(1)}\left(
\vartheta\right)  +\exp\left(  4in\vartheta\right)  t_{n}^{(1)}\left(
-\vartheta\right)  .
\end{equation}
We therefore restrict the study of $t_{n}\left(  \vartheta\right)  $ to the
one of $t_{n}^{(1)}\left(  \vartheta\right)  $. Put
\[
t_{n}^{(1)}\left(  \vartheta\right)  =\sum\limits_{k=0}^{\left(  n-1\right)
/2}v_{k}\exp\left(  4ik\vartheta\right)
\]
where
\[
v_{k}=\binom{2k}{k}\binom{2n-2k}{n-k}.
\]
Put $\ u_{-1}\left(  \vartheta\right)  =0$ and let $u_{k}\left(
\vartheta\right)  ={\sum\limits_{j=0}^{k}}\exp\left(  4ij\vartheta\right)  $
for $k\geq0$.
Then
\[
u_{k}\left(  \vartheta\right)  =\frac{\exp\left(  4i\left(  k+1\right)
\vartheta\right)  -1}{\exp\left(  4i\vartheta\right)  -1}=\left(  \frac
{\sin\left(  2\left(  k+1\right)  \vartheta\right)  }{\sin2\vartheta}\right)
\exp\left(  2ik\vartheta\right)
\]
from which we get
\[
\left\vert u_{k}\left(  \vartheta\right)  \right\vert =\left\vert \frac
{\sin\left(  2\left(  k+1\right)  \vartheta\right)  }{\sin2\vartheta
}\right\vert \leq\frac{1}{\left\vert \sin2\vartheta\right\vert }=c_{0}\left(
\vartheta\right)  .
\]
Abel's transformation gives
\begin{align*}
t_{n}^{(1)}\left(  \vartheta\right)   &  ={\sum\limits_{k=0}^{(n-1)/2}}%
v_{k}\left(  u_{k}\left(  \vartheta\right)  -u_{k-1}\left(  \vartheta\right)
\right) \\
&  ={\sum\limits_{k=0}^{(n-1)/2}}v_{k}u_{k}\left(  \vartheta\right)
-{\sum\limits_{k=-1}^{(n-1)/2-1}}v_{k+1}u_{k}\left(  \vartheta\right) \\
&  ={\sum\limits_{k=0}^{(n-1)/2-1}}\left(  v_{k}-v_{k+1}\right)  u_{k}\left(
\vartheta\right)  +u_{(n-1)/2}\left(  \vartheta\right)  v_{(n-1)/2}.
\end{align*}
We now observe that the sequence $\left(  v_{k}\right)  _{k\geq0}$ is
decreasing for $k<n/2$. Indeed, it is immediate to compute that
\[
v_{k}-v_{k+1}=2\binom{2k}{k}\binom{2(n-k-1)}{n-k-1}\left(  \frac
{n-2k-1}{(n-k)(k+1)}\right)  >0.
\]
Hence
\begin{align*}
\left\vert t_{n}^{(1)}\left(  \vartheta\right)  \right\vert  &  \leq
\sum\limits_{k=0}^{(n-1)/2-1}\left(  v_{k}-v_{k+1}\right)  \left\vert
u_{k}\left(  \vartheta\right)  \right\vert +\left\vert u_{(n-1)/2}\left(
\vartheta\right)  v_{(n-1)/2}\right\vert \\
&  \leq c_{0} \left(  \vartheta\right)  \left(  \sum\limits_{k=0}%
^{(n-1)/2-1}\left(  v_{k}-v_{k+1}\right)  +v_{(n-1)/2}\right) \\
&  =c_{0}\left(  \vartheta\right)  \binom{2n}{n}\\
&  \sim c_{0}\left(  \vartheta\right)  \frac{4^{n}}{\sqrt{\pi n}}%
\end{align*}
by Stirling's formula. Thus there is a constant $c_{1}(\vartheta)$ such that,
for all odd integers $n$,
\[
\left\vert t_{n}^{(1)}\left(  \vartheta\right)  \right\vert \leq c_{1}\left(
\vartheta\right)  \frac{4^{n}}{\sqrt{n}}.
\]
It follows, by \eqref{gggggg}, that
\begin{align*}
\left\vert t_{n}\left(  \vartheta\right)  \right\vert  &  =\left\vert
t_{n}^{(1)}\left(  \vartheta\right)  +\exp\left(  4in\vartheta\right)
t_{n}^{(1)}\left(  -\vartheta\right)  \right\vert \\
&  \leq\left\vert t_{n}^{(1)}\left(  \vartheta\right)  \right\vert +\left\vert
t_{n}^{(1)}\left(  -\vartheta\right)  \right\vert \\
&  \leq(c_{1}\left(  \vartheta\right)  +c_{1}\left(  -\vartheta\right)
)\frac{4^{n}}{\sqrt{n}}.
\end{align*}
Hence the Lemma with $C\left(  \vartheta\right)  =c_{1}\left(  \vartheta
\right)  +c_{1}\left(  -\vartheta\right)  $ in the case $n$ is odd.
\medskip

\noindent Case b) : Suppose now that $n$ is even, and put
\[
t_{n}^{(1)}\left(  \vartheta\right)  =\sum\limits_{k=0}^{\left(  n/2\right)
-1}\binom{2k}{k}\binom{2n-2k}{n-k}\exp\left(  4ik\vartheta\right)
\]
and
\[
t_{n}^{(2)}\left(  \vartheta\right)  =\sum\limits_{k=\left(  n/2\right)
+1}^{n}\binom{2k}{k}\binom{2n-2k}{n-k}\exp\left(  4ik\vartheta\right)  .
\]
By making the change of variable $j=n-k$ in $t_{n}^{(2)}(\vartheta),$ we get
\[
t_{n}^{(2)}\left(  \vartheta\right)  =\exp\left(  4in\vartheta\right)
t_{n}^{(1)}\left(  -\vartheta\right)
\]
which yields
\[
t_{n}\left(  \vartheta\right)  =t_{n}^{(1)}\left(  \vartheta\right)
+\exp\left(  4in\vartheta\right)  t_{n}^{(1)}\left(  -\vartheta\right)
+\binom{n}{n/2}^{2}\exp\left(  2in\vartheta\right)  .
\]
Using the same argument as above and the fact, due again to Stirling's
formula, that
\[
\binom{n}{n/2}^{2}\exp\left(  2in\vartheta\right)  =O\left(  \frac{4^{n}}%
{n}\right)  ,
\]
we deduce the Lemma.
\end{proof}

We are now able to deduce the key-result of this section.

\begin{proposition}
\label{prop4}Let $\vartheta$ be a non-zero (modulo $\pi/2$) real number. If
$p>2$, then the series
\begin{equation}
{\displaystyle\sum\limits_{n=1}^{\infty}}\left(  2n+1\right)  \left\vert
\frac{\sum\limits_{k=0}^{n}\left(  2k\right)  !\left(  2n-2k\right)
!\binom{n}{k}^{2}\exp(4ik\vartheta)}{\sum\limits_{k=0}^{n}\left(  2k\right)
!\left(  2n-2k\right)  !\binom{n}{k}^{2}}\right\vert ^{2p} \label{series1}%
\end{equation}
converges.
\end{proposition}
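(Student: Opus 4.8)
The plan is to exploit the two preparatory lemmas to control the general term of the series \eqref{series1} and then compare it with a convergent $p$-series. First I observe that the term $(2k)!(2n-2k)!\binom{n}{k}^2$ can be rewritten: since $(2k)! = (k!)^2\binom{2k}{k}$ and $(2n-2k)! = ((n-k)!)^2\binom{2n-2k}{n-k}$, one gets
\[
(2k)!(2n-2k)!\binom{n}{k}^2 = (n!)^2\binom{2k}{k}\binom{2n-2k}{n-k}.
\]
Consequently the factor $(n!)^2$ cancels in the quotient appearing in \eqref{series1}, and that quotient is exactly
\[
\frac{\sum_{k=0}^{n}\binom{2k}{k}\binom{2n-2k}{n-k}\exp(4ik\vartheta)}{\sum_{k=0}^{n}\binom{2k}{k}\binom{2n-2k}{n-k}} = \frac{t_{n}(\vartheta)}{4^{n}},
\]
using the notation \eqref{eq01} for the numerator and Lemma \ref{lem1} for the denominator.

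Next I apply Lemma \ref{lem2}: there is a positive constant $C(\vartheta)$ with $|t_{n}(\vartheta)| \le C(\vartheta)\,4^{n}/\sqrt{n}$ for all $n\ge 1$, since $\vartheta\in(0,\pi/2)$ is non-zero modulo $\pi/2$. Therefore
\[
\left|\frac{t_{n}(\vartheta)}{4^{n}}\right|^{2p} \le \frac{C(\vartheta)^{2p}}{n^{p}},
\]
and the general term of \eqref{series1} is bounded above by $(2n+1)\,C(\vartheta)^{2p}\,n^{-p}$. Since $(2n+1)\,n^{-p} = O(n^{1-p})$ and $p>2$ gives $1-p<-1$, the comparison series $\sum_{n\ge 1} n^{1-p}$ converges, hence so does \eqref{series1} by the comparison test.

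I do not anticipate a real obstacle here: the substantive work has already been done in Lemma \ref{lem2} (the Abel-summation estimate), and all that remains is the bookkeeping identity $(2k)!(2n-2k)!\binom{n}{k}^2 = (n!)^2\binom{2k}{k}\binom{2n-2k}{n-k}$ together with a routine comparison. The only point deserving a moment of care is to make sure the exponent count is right — each term of \eqref{series1} involves the $2p$-th power of the normalized quantity, so the bound is $n^{-p}$ and not $n^{-2p}$, and the surviving polynomial factor $2n+1$ then requires precisely $p>2$ (not merely $p\ge 2$) for summability, which is consistent with the hypothesis of the proposition and with the fact that part (ii) of Theorem \ref{Main} shows $p=2$ genuinely fails for suitable $\vartheta$.
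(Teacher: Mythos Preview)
Your proof is correct and follows essentially the same route as the paper: rewrite the coefficients via $(2k)!(2n-2k)!\binom{n}{k}^2=(n!)^2\binom{2k}{k}\binom{2n-2k}{n-k}$, apply Lemma~\ref{lem1} to identify the denominator as $4^n$, then invoke Lemma~\ref{lem2} and compare with $\sum n^{1-p}$. The only cosmetic slip is the parenthetical ``since $\vartheta\in(0,\pi/2)$'': the hypothesis is merely that $\vartheta$ is non-zero modulo $\pi/2$, but Lemma~\ref{lem2} covers this anyway.
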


\begin{proof}
If we put
\[
\mathcal{S}_{n}\left(  \vartheta\right)  ={\textstyle\sum\limits_{k=0}^{n}%
}\left(  2k\right)  !\left(  2n-2k\right)  !\binom{n}{k}^{2}\exp\left(
4ik\vartheta\right)
\]
then the series $\left(  \ref{series1}\right)  $ is equal to%
\[
{\textstyle\sum\limits_{n=1}^{\infty}}\left(  2n+1\right)  \left\vert
\frac{\mathcal{S}_{n}\left(  \vartheta\right)  }{\mathcal{S}_{n}\left(
0\right)  }\right\vert ^{2p}.
\]
But
\begin{align*}
\mathcal{S}_{n} ( \vartheta)  &  = \sum_{k=0}^{n} (2k)! (2n-2k)! \frac{n!^{2}%
}{( k! (n-k)!)^{2}} \exp( 4ik\vartheta)\\
&  = \sum_{k=0}^{n} n!^{2} \left(  \frac{(2k)!}{ k!^{2}}\right)  \left(
\frac{(2n-2k)!}{(n-k)! ^{2}}\right)  \exp\left(  4ik\vartheta\right) \\
&  = n!^{2} \sum_{k=0}^{n} \binom{2k}{k}\binom{2n-2k}{n-k} \exp\left(
4ik\vartheta\right)  .
\end{align*}
We obtain that the series \eqref{series1} is%
\begin{equation}
\sum_{n=1}^{\infty}(2n+1)\left\vert \frac{\sum_{k=0}^{n}\binom{2k}{k}%
\binom{2n-2k}{n-k}\exp\left(  4ik\vartheta\right)  }{\sum_{k=0}^{n}\binom
{2k}{k}\binom{2n-2k}{n-k}}\right\vert ^{2p}=\sum_{n=1}^{\infty}%
(2n+1)\left\vert \frac{\sum_{k=0}^{n}\binom{2k}{k}\binom{2n-2k}{n-k}%
\exp\left(  4ik\vartheta\right)  }{4^{n}}\right\vert ^{2p} \label{eq02}%
\end{equation}
using Lemma \ref{lem1}. Lemma \ref{lem2}
then implies that
\[
\left(  2n+1\right)  \left\vert \frac{\sum\limits_{k=0}^{n}\binom{2k}{k}%
\binom{2n-2k}{n-k}\exp\left(  4ik\vartheta\right)  }{4^{n}}\right\vert
^{2p}\leq\left(  2n+1\right)  \left\vert \frac{C(\vartheta)}{\sqrt{n}%
}\right\vert ^{2p}\ll_{\vartheta}\frac{1}{n^{p-1}}.
\]
Therefore the series in $\left(  \ref{series1}\right)  $ converges as soon as
$p>2.$
\end{proof}

The proof of Theorem \ref{Main} (i) follows now easily.

\begin{proof}
[Proof of the Theorem \ref{Main} (i)]Combining Proposition \ref{prop1},
equation \eqref{eq2} and Proposition \ref{prop4}, we get
\begin{equation}
\left\Vert \widehat{\mu_{a}^{\left(  p\right)  }}\right\Vert _{\mathrm{2}}%
^{2}={\sum\limits_{\left[  \pi\right]  \in\widehat{U}}}\left(  \dim E_{\pi
}\right)  \left\Vert \widehat{\mu_{a}^{(p)}}\left(  \pi\right)  \right\Vert
_{\mathrm{HS}}^{2}<\infty. \label{plancherel}%
\end{equation}

The Plancherel isomorphism Theorem (see \cite{hr}, Theorem $28.43$) guarantees
the existence of a function $f_{a,p}\in L^{2}\left(  U\right)  $ such that
$\widehat{f_{a,p}}=\widehat{\mu_{a}^{(p)}}$. Hence $\mu_{a}^{(p)}=f_{a,p}
\mathrm{d}m_{U}$.
\end{proof}

\section{Proof of the Main Theorem $(ii)$ :\\ A Counter-Example to The
Dichotomy Conjecture}
\label{sec6}

In this section we will study the behavior of $t_{n}^{(1)}(\vartheta)$
introduced in the preceding section and find a lower bound for $t_{n}\left(
\vartheta\right)  $, valid for a dense enough set of indices. In this respect,
we need several preliminary results (having a number-theoretical flavour) that
we establish now.

First we recall the notion of \emph{lower density} of a set of positive
integers. The lower density of a set $A$ of integers is by definition
\[
\underline{\mathrm{d}} A = \liminf_{n \to+ \infty} \frac{|A \cap\{1,\dots,
n\}| }{n}.
\]
If in this definition, we can replace the $\liminf$ by a simple $\lim$ then we
simply speak of a \emph{density}.

Finally, if $u \in{\mathbb{R}}$, we shall also denote $A+ u =\{ a+u \text{ for
} a \in A \}$, a translate of $A$ and $u \cdot A = \{ ua \text{ for } a \in A
\}$, a dilate of $A$.

The following lemma is crucial for our purpose.

\begin{lemma}
\label{harmo} 
Let $A$ be a set of integers such that $\underline{\mathrm{d}} A >0$, then the series
\[
\sum_{a \in A} \frac{1}{a}
\]
diverges.
\end{lemma}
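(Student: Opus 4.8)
The plan is to reduce the statement to a standard fact about the divergence of $\sum 1/a$ over sets of positive lower density, which is most cleanly proved by a dyadic-block (Cauchy-condensation-style) argument. Concretely, for each integer $j\geq 0$ consider the dyadic block $B_j=\{2^j,2^j+1,\dots,2^{j+1}-1\}$ of length $2^j$, and let $a_j=|A\cap B_j|$. Every element $a\in A\cap B_j$ satisfies $1/a> 1/2^{j+1}$, so
\[
\sum_{a\in A}\frac{1}{a}=\sum_{j\geq 0}\sum_{a\in A\cap B_j}\frac{1}{a}\geq \sum_{j\geq 0}\frac{a_j}{2^{j+1}}.
\]
Thus it suffices to show that the series $\sum_j a_j/2^{j+1}$ diverges, and for that I would exploit the hypothesis $\underline{\mathrm{d}}A=\delta>0$.

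The key step is to translate positive lower density into a lower bound on the partial sums $\sum_{i\le j}a_i$. Since $\underline{\mathrm{d}}A>0$, there is an $N_0$ such that $|A\cap\{1,\dots,n\}|\geq (\delta/2)n$ for all $n\geq N_0$. Applying this with $n=2^{j+1}-1$ gives $\sum_{i=0}^{j}a_i=|A\cap\{1,\dots,2^{j+1}-1\}|\geq (\delta/2)(2^{j+1}-1)\geq (\delta/4)2^{j+1}$ for $j$ large enough. Now suppose for contradiction that $\sum_j a_j/2^{j+1}$ converges; then its tail $\sum_{i\geq j}a_i/2^{i+1}\to 0$ as $j\to\infty$. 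But a telescoping/summation-by-parts estimate shows this is incompatible with the partial-sum lower bound: writing $S_j=\sum_{i=0}^j a_i$, one has $a_i=S_i-S_{i-1}$ and Abel summation rearranges $\sum_{i=j}^{M}a_i/2^{i+1}$ into a sum with positive weights on the $S_i$ that are bounded below linearly in $2^i$; since each term $S_i/2^{i+1}\geq \delta/4$ does not tend to $0$, the block sums $\sum_{i\in[2^m,2^{m+1})}a_i/2^{i+1}$ stay bounded below by an absolute positive constant, contradicting convergence. (Equivalently and more simply: if $\sum a_i/2^{i+1}<\infty$ then $\sum a_i/2^{i+1}\cdot(\text{any bounded weights})<\infty$, yet the lower bound $S_j\gtrsim 2^{j+1}$ forces $\limsup_j a_j/2^{j+1}$-type averages to be bounded away from $0$.)

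An alternative, perhaps cleaner, route avoids contradiction entirely: group the dyadic blocks themselves into super-blocks. For a parameter $r$ to be chosen, consider $\sum_{i=rm}^{r(m+1)-1} a_i/2^{i+1}$; the total count $\sum_{i=rm}^{r(m+1)-1}a_i=S_{r(m+1)-1}-S_{rm-1}\geq (\delta/4)2^{r(m+1)} - 2^{rm}$, which for $r$ fixed large is $\geq c_r 2^{r(m+1)}$ with $c_r>0$. Since each such element lies in a block with denominator $\leq 2^{r(m+1)}$, the super-block contributes $\geq c_r$ to $\sum_j a_j/2^{j+1}$, and summing over $m$ gives divergence. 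This makes the mechanism transparent: positive lower density guarantees that a fixed proportion of each geometric range $[2^{rm},2^{r(m+1)})$ lies in $A$, and those elements alone contribute a bounded-below amount to the harmonic sum.

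I expect the main obstacle to be purely bookkeeping: making the inequality $|A\cap\{1,\dots,n\}|\geq(\delta/2)n$ hold uniformly (only valid for $n\geq N_0$, so the first finitely many blocks must be discarded without affecting divergence), and choosing the super-block length $r$ so that the subtracted term $2^{rm}$ is genuinely negligible against $(\delta/4)2^{r(m+1)}=(\delta/4)2^r\cdot 2^{rm}$ — which holds as soon as $2^r>8/\delta$. No deep idea is needed beyond the observation that $1/a$ is comparable to $2^{-j}$ on the $j$-th dyadic block; the lemma is essentially the statement that a set meeting every long geometric window in positive proportion cannot have a convergent harmonic sum.
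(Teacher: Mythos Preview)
Your super-block argument is correct and complete: once $r$ is chosen with $2^r>4/\delta$, each geometric range $[2^{rm},2^{r(m+1)})$ contains at least $c_r\,2^{r(m+1)}$ elements of $A$ (for $m$ large), and since each contributes at least $2^{-r(m+1)}$ to the harmonic sum, every such range contributes $\geq c_r>0$; summing over $m$ gives divergence. (Your first route via contradiction and Abel summation is also salvageable---Abel's formula with $b_i=2^{-(i+1)}$ gives $\sum_{i\le M}a_i/2^{i+1}=S_M/2^{M+1}+\sum_{i<M}S_i/2^{i+2}\geq \sum_{i\text{ large}}\delta/8$---but as written it is vague, and the super-block version is the one that actually carries the proof.)

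The paper proceeds differently: rather than fixing a dyadic scaffold, it builds an ad~hoc increasing sequence $(n_i)$ inductively so that each gap $\{n_{i-1}+1,\dots,n_i\}$ contains more than $\alpha n_i/2$ elements of $A$, and then compares the contribution of that gap to the integral $\int_{n_i-[\alpha n_i/2]}^{n_i+1}\mathrm{d}x/x\sim -\log(1-\alpha/2)>0$. Both arguments ultimately do the same thing---exhibit infinitely many disjoint intervals each contributing a fixed positive constant to $\sum_{a\in A}1/a$---but yours imposes a fixed geometric partition and reads off the constant algebraically, while the paper lets the sequence $(n_i)$ adapt to $A$ and extracts the constant via a logarithmic integral. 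Your approach is arguably more transparent and reusable; the paper's is slightly shorter since it skips the dyadic bookkeeping.
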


\begin{proof}
Write $n_{0} =0$ and $\alpha=\underline{\mathrm{d}} A >0$. By definition,
there is an integer $n_{1}$ such that (for instance)
\[
|A \cap\{1,\dots, n_{1}\}| > \left[  \frac{\alpha n_{1}}{2} \right]  +1
\]
holds. More generally, we may construct a sequence of integer $(n_{i})$ such
that
\begin{align*}
|A \cap\{n_{i-1}+1,\dots, n_{i}\}|  &  = |A \cap\{1,\dots, n_{i}\}| - |A
\cap\{1,\dots, n_{i-1}\}|\\
&  > \frac{2\alpha n_{i}}{3}- n_{i-1}\\
&  > \left[  \frac{\alpha n_{i}}{2} \right]  +1.
\end{align*}
But then
\begin{align*}
\sum_{a \in A\cap\{n_{i-1}+1,\dots, n_{i}\}} \frac{1}{a}  &  \geq\sum_{a=
n_{i} - [\alpha n_{i} /2]}^{n_{i}} \frac{1}{a}\\
&  > \int_{n_{i} - [\alpha n_{i} /2]}^{n_{i} +1} \frac{\mathrm{d}x}{x}\\
&  = \log\left(  \frac{n_{i} +1}{n_{i} - [\alpha n_{i} /2]} \right) \\
&  \sim- \log(1-\alpha/2) >0
\end{align*}
when $i$ tends to infinity. Summing these contributions, we get as $i$ tends
to $+\infty$,
\[
\sum_{a \in A, a \leq n_{i}} \frac1a \gtrsim- i \log(1-\alpha/2)
\]
and the series thus diverges.
\end{proof}

We shall also need the following definition. For a $\omega\in\left(
0,\pi\right)  $ and $c>1/2$, we define ${\mathcal{E}}_{\omega,c}$ as the set
of integers $n$ such that
\[
\sin\omega\sin n\omega\geq c.
\]
Let now
\[
{\mathcal{M}}=\left\{  \omega\in{\mathbb{R}}\text{ such that there exists
}c>\frac{1}{2}\text{ such that }\underline{\mathrm{d}}{\mathcal{E}}_{\omega
,c}>0\right\}  .
\]
Note immediately that ${\mathcal{M}}$ is not empty, since for instance one can
easily see that $\pi/3$ or $\pi/4$ are in ${\mathcal{M}}$. In fact, it is much
bigger as shown by the following lemma for which we recall the following
definition : a sequence of real numbers $(x_{n})$ is said to be uniformly
distributed modulo 1 if for any $0\leq a<b\leq1$,
\[
\lim_{n\rightarrow+\infty} \frac{|\{1\leq m\leq n \text{ such that } \{x_{m}
\} \in(a,b)\}|}{n}=b-a,
\]
where the notation $\{x_{n}\}$ stands for the fractional part of $x_{n}$.

\begin{lemma}
\label{MMM}
One has
\[
{\mathcal{M}} = \left(  \frac{\pi}{6}, \frac{5\pi}{6} \right)  .
\]
\end{lemma}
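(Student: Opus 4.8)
The goal is to identify ${\mathcal M}$ exactly as the open interval $(\pi/6,5\pi/6)$. I would prove the two inclusions separately.

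\textbf{Step 1: ${\mathcal M}\subseteq(\pi/6,5\pi/6)$.}
Suppose $\omega\notin(\pi/6,5\pi/6)$, i.e. $\omega\in(0,\pi/6]\cup[5\pi/6,\pi)$. Then $|\sin\omega|\le\sin(\pi/6)=1/2$ (using that on $(0,\pi)$ the sine is concave and symmetric about $\pi/2$, so $\sin\omega\le 1/2$ exactly on the complement of $(\pi/6,5\pi/6)$). Consequently $\sin\omega\sin n\omega\le |\sin\omega|\le 1/2<c$ for every $n$ and every $c>1/2$, so ${\mathcal E}_{\omega,c}=\varnothing$ and in particular $\underline{\mathrm d}{\mathcal E}_{\omega,c}=0$. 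Hence $\omega\notin{\mathcal M}$. This direction is essentially immediate.

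\textbf{Step 2: $(\pi/6,5\pi/6)\subseteq{\mathcal M}$.}
Fix $\omega\in(\pi/6,5\pi/6)$, so $\sin\omega>1/2$; choose $c$ with $1/2<c<\sin\omega$ — say $c=(\sin\omega+1/2)/2$ — and set $\delta=c/\sin\omega\in(1/2,1)$. We must show ${\mathcal E}_{\omega,c}=\{n:\sin n\omega\ge\delta\}$ has positive lower density. I would split into two cases according to whether $\omega/\pi$ is rational or irrational. If $\omega/\pi$ is irrational, Weyl's theorem gives that $(n\omega/2\pi)$ is uniformly distributed modulo $1$; since $\{n:\sin n\omega\ge\delta\}$ corresponds (modulo $1$) to a finite union of subintervals of total length $\ell(\delta)>0$ depending only on $\delta$, uniform distribution yields that this set has density exactly $\ell(\delta)>0$, hence positive lower density. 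If $\omega/\pi=r/s$ is rational in lowest terms, then $n\mapsto n\omega\bmod 2\pi$ is periodic and the relevant residues $n\bmod (\text{period})$ hitting $\sin n\omega\ge\delta$ form a nonempty set — nonempty because the values $n\omega$ become equidistributed on a finite subgroup of the circle and, $\delta$ being $<1$, at least one of them lands in the arc where $\sin\ge\delta$ once $s$ is not too small; the genuinely small denominators ($\omega=\pi/2,\ \pi/3,\ 2\pi/3$, etc.) have to be checked by hand, and indeed $\sin(\pi/2)=1$, $\sin(\pi/3)=\sin(2\pi/3)=\sqrt3/2$ all exceed any admissible $\delta$ for the corresponding $\omega$, so these cases are fine. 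In every rational case the good residues then give ${\mathcal E}_{\omega,c}$ a positive density equal to their proportion. Thus $\omega\in{\mathcal M}$.

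\textbf{Main obstacle.}
The irrational case is clean via Weyl equidistribution, and the generic rational case is routine; the only delicate point is making sure that \emph{no} rational $\omega\in(\pi/6,5\pi/6)$ is excluded — i.e. that for every such $\omega$ one can pick $c>1/2$ with a nonempty set of good residues. The safe route is: for rational $\omega=\pi r/s$, the finite orbit $\{n\omega\bmod 2\pi\}$ always contains a point within distance $\pi/s$ of $\pi/2$ (where $\sin$ attains $1$), so for $s$ large the maximum of $\sin n\omega$ over a period is $\ge\cos(\pi/s)$, which exceeds $\delta$ once $s$ is large enough; the finitely many small $s$ are handled by inspection. Then the set of $n$ achieving the near-maximum is an arithmetic progression of positive density, giving $\underline{\mathrm d}{\mathcal E}_{\omega,c}>0$. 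Packaging these two cases uniformly — and in particular pinning down the exact endpoints $\pi/6$ and $5\pi/6$ via the identity $\sin(\pi/6)=1/2$ together with the constraint $c>1/2$ — is the heart of the argument; everything else is bookkeeping.
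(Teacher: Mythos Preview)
Your overall plan matches the paper's: the inclusion $\mathcal M\subset(\pi/6,5\pi/6)$ is immediate from $\sin\omega\le 1/2$; for the reverse inclusion you split into irrational $\omega/\pi$ (Weyl equidistribution) and rational $\omega=\pi r/s$ (explicit arithmetic progression in the finite orbit). The paper does the same, further splitting the rational case by parity of $s$: for $s$ even it exhibits a progression on which $\sin n\omega=1$, and for $s$ odd one on which $\sin n\omega=\cos(\pi/2s)$, then proves the key inequality $\sin(\pi r/s)\cos(\pi/2s)>\tfrac12$.

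There is, however, a genuine gap in your handling of the rational case near the boundary. You assert that $\max_n\sin n\omega\ge\cos(\pi/s)$ ``exceeds $\delta$ once $s$ is large enough'', leaving ``finitely many small $s$'' for inspection. But with your choice $c=(\sin\omega+1/2)/2$ the threshold $\delta=c/\sin\omega$ depends on $\omega$, and as $\omega=\pi r/s$ approaches $\pi/6$ one has $\delta\to 1^-$. So the cutoff on $s$ is \emph{a priori} not uniform in $r$, and the dichotomy ``large $s$ versus small $s$'' does not by itself reduce to finitely many cases. What actually makes this work---and what the paper carries out explicitly---is a Diophantine input: since $r/s\ne 1/6$ in lowest terms, the integer $6r-s$ is nonzero, hence $|r/s-1/6|\ge 1/(6s)$, and the mean value theorem gives $\sin\omega-\tfrac12\gtrsim 1/s$; meanwhile $1-\cos(\pi/2s)\lesssim 1/s^2$. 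These two rates combine to yield $\sin(\pi r/s)\cos(\pi/2s)>\tfrac12$ for all $s$ beyond an \emph{absolute} bound, after which only finitely many $(r,s)$ remain to check. The paper's proof is precisely this computation (its pivot is the line ``the integer $6p-q$ is non-zero and thus $\ge 1$''); without it, your large-$s$/small-$s$ argument does not close.
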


\begin{proof}
Let $\omega$ be in $(\pi/6, 5\pi/6)$ (this implies $\sin\omega>1/2$). We
consider several distinct cases. \medskip

\noindent Case a) Suppose first that $\omega/ \pi$ is an irrational number.
Then the sequence $n \omega/ 2 \pi$ is uniformly distributed modulo $1$ (see
for instance Example 2.1 of \cite{KN}). Thus $n \omega$ is uniformly
distributed modulo $2\pi$.

Considering the quantity
\[
q_{\omega}=\frac{1}{2}\left(  1+\frac{1}{2\sin\omega}\right)  \in\left(
\frac{1}{2\sin\omega},1\right)
\]
the uniform distribution modulo $1$ property gives a positive density for
those $n$ such that
\[
n\omega\left(  \operatorname{mod} \pi\right)  \in(\arcsin q_{\omega}%
,\pi-\arcsin q_{\omega}),
\]
a given fixed interval. It follows that the set of $n$ such that
\[
\sin n\omega>q_{\omega}>\frac{1}{2}%
\]
has a positive lower density which concludes this case. \medskip

From now on, we consider the case where
\[
\omega= \frac{p \pi}{q}%
\]
for $p$ and $q$ two positive coprime numbers. Two cases remain to be
investigated. \medskip

\noindent Case b) $q$ even : recall that $p$ and $q$ are coprime integers. But
$q$ even implies $p$ odd and it is sufficient to notice that $p$ must be
invertible modulo $2q$. Call $p^{-1}$ any positive integer, inverse of $p$
modulo $2q$. Then the integers $n$ in
\[
\frac{qp^{-1}}{2} + \left(  2q\cdot{\mathbb{N}} \right)
\]
(notice that this set is an arithmetic progression of reason $2q$ and thus
(positive) density $1/2q$), satisfy $pn = q/2 + 2lq$ for some integer $l$ and thus
\begin{align*}
\sin n \omega &  = \sin\frac{np \pi}{q}\\
&  = \sin\left(  \frac12 +2l \right)  \pi\\
&  = \sin\left(  \frac{\pi}{2}+2l \pi\right) \\
&  = 1
\end{align*}
and this case is solved. \medskip

\noindent Case c) $q$ odd: we define
\[
\eta_{q} = \left\{
\begin{array}
[c]{rl}%
-1 & \text{ if } q \equiv1 \pmod{4}\\
1 & \text{ if } q \equiv3 \pmod{4}
\end{array}
\right.
\]
and notice that $q + \eta_{q}$ is divisible by $4$. Since $p$ is invertible
modulo $q$ (call $p^{-1}$ any positive integer, inverse of $p$ modulo $q$).
Then for any
\[
n\in\frac{p^{-1}(q+ \eta_{q})}{2}+ \left(  2q \cdot{\mathbb{N}} \right)
\]
(again a set with positive density, namely $1/2q$), one has $np=(q + \eta
_{q})/2 +2lq$ for some integer $l$. Thus we obtain
\[
\sin n\omega=\sin\left(  \frac{q + \eta_{q}}{2q}+2l \right)  \pi= \sin\left(
\frac{\pi}{2} + \frac{\eta_{q}}{2q} \pi\right)  = \cos\frac{\eta_{q} \pi}{2q}
=\cos\frac{ \pi}{2q}.
\]
Thus the result is proved if we can prove that
\begin{equation}
\sin\frac{p\pi}{q}\cos\frac{\pi}{2q}>\frac{1}{2}. \label{labelle}%
\end{equation}
For proving \eqref{labelle}, we notice that we can replace without loss of
generality $p$ by $q-p$. Therefore in what follows we assume
\[
\frac{p\pi}{q} \leq\frac{\pi}2.
\]

We define
\[
\rho= \frac{1}{2 \cos\pi/5}%
\]

First, if $\sin p \pi/q > \rho$, then we obtain
\[
\sin\left(  \frac{p\pi}{q} \right)  \cos\frac{\pi}{2q} > \frac{\cos\pi/2q}{2
\cos\pi/5} \geq\frac12
\]
since $q \geq3$ and \eqref{labelle} holds. \smallskip

Second, if
\[
\sin\frac{p\pi}{q} \leq\rho
\]
then applying the Mean-value Theorem yields
\begin{align*}
\sin\left(  \frac{p\pi}{q}\right)  -\frac{1}{2}  &  =\sin\frac{p\pi}{q}%
-\sin\left(  \frac{\pi}{6}\right) \\
&  \geq\left(  \frac{p\pi}{q}-\frac{\pi}{6}\right)  \min_{t\in\lbrack
\pi/6,p\pi/q]}\cos t\\
&  = \left(  \frac{p}{q}-\frac{1}{6}\right)  \pi\cos\frac{p\pi}{q}%
\end{align*}
since $p\pi/q \in( \pi/6,\pi/2 )$. We then obtain
\begin{align*}
\sin\left(  \frac{p\pi}{q}\right)  -\frac{1}{2}  &  \geq\frac{6p-q}{6q}%
\pi\sqrt{1-\rho^{2}}\\
&  \geq\frac{\pi}{6q}\sqrt{1-\rho^{2}},
\end{align*}
the last inequality following from the fact that the integer $6p-q$ 
is non-zero and thus $\geq1$. Moreover, using the classical
inequality $\cos x\geq1-x^{2}/2$ valid for $x$ positive, we infer that
\[
\sin\frac{p\pi}{q}\cos\frac{\pi}{2q}\geq\left(  \frac{1}{2}+\frac{\pi}%
{6q}\sqrt{1-\rho^{2}}\right)  \left(  1-\frac{\pi^{2}}{8q^{2}}\right)  .
\]
But the right-hand side is larger than or equal to $1/2$ as soon as
\[
1>\frac{3\pi}{8q\sqrt{1-\rho^{2}}}+\frac{\pi^{2}}{8q^{2}},
\]
an inequality valid for any $q\geq3$ which concludes of \eqref{labelle}.

The lemma is proved.
\end{proof}

Here is now what we can obtain.

\begin{lemma}
\label{lem3} Let $\vartheta$ be a non-zero (modulo $\pi/2$) real number of
double belonging to ${\mathcal{M}}$. Let $c_{\vartheta}>1/2$ be such that
${\mathcal{E}}_{2\vartheta, c_{\vartheta}}$ has a positive lower density.
Then, there is a positive real number $C^{\prime}\left(  \vartheta\right)  $
such that
\[
\left\vert t_{n}\left(  \vartheta\right)  \right\vert \geq C^{\prime}\left(
\vartheta\right)  \frac{4^{n}}{\sqrt{n}}%
\]
holds for any large enough $n$ in ${\mathcal{E}}_{2\vartheta, c_{\vartheta}}
-1 $.
\end{lemma}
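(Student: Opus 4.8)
The plan is to exploit the decomposition $t_n(\vartheta)=t_n^{(1)}(\vartheta)+\exp(4in\vartheta)\,t_n^{(1)}(-\vartheta)$ from \eqref{gggggg} (and its even-$n$ analogue), and to extract a genuine lower bound for $|t_n^{(1)}(\vartheta)|$ by redoing the Abel summation of Lemma \ref{lem2} more carefully. Recall that in that computation one writes $t_n^{(1)}(\vartheta)=\sum_k v_k u_k(\vartheta)$ where $u_k(\vartheta)=\bigl(\sin(2(k+1)\vartheta)/\sin 2\vartheta\bigr)\exp(2ik\vartheta)$ and $v_k=\binom{2k}{k}\binom{2n-2k}{n-k}$ is positive and decreasing on the relevant range. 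The crucial point is that all the ``mass'' of the sum $\sum_k v_k$ is concentrated near $k=0$: indeed $v_0=\binom{2n}{n}\sim 4^n/\sqrt{\pi n}$ already accounts for the full order of magnitude of $\sum_{k=0}^{(n-1)/2}v_k=\binom{2n}{n}$ (this telescoping identity is exactly what was used in Lemma \ref{lem2}). So the term $k=0$ dominates, and $t_n^{(1)}(\vartheta)\approx v_0 u_0(\vartheta)=\binom{2n}{n}\cdot\dfrac{\sin 2\vartheta}{\sin 2\vartheta}=\binom{2n}{n}$; one must check the error from $k\ge 1$ is smaller.

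First I would make the heuristic ``$k=0$ dominates'' precise. Isolate the $k=0$ term: $t_n^{(1)}(\vartheta)=v_0+\sum_{k\ge 1}v_k\exp(4ik\vartheta)$, and apply Abel summation only to the tail $\sum_{k\ge 1}v_k\exp(4ik\vartheta)$. As in Lemma \ref{lem2} this tail is bounded by $c_0(\vartheta)\bigl(\sum_{k\ge 1}(v_k-v_{k+1})+v_{(n-1)/2}\bigr)=c_0(\vartheta)\,v_1$ (the telescoping sum now starts at $k=1$, giving $v_1$ rather than $v_0$). Since $v_1=\binom{2}{1}\binom{2n-2}{n-1}=2\binom{2n-2}{n-1}\sim 2\cdot 4^{n-1}/\sqrt{\pi n}$, we get $v_1/v_0\to 1/2$ as $n\to\infty$. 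This is not small enough on its own --- the tail bound $c_0(\vartheta)v_1$ could be comparable to $v_0$ --- so the naive splitting at $k=0$ is insufficient, and this is where the hypothesis on ${\mathcal E}_{2\vartheta,c_\vartheta}$ must enter.

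The resolution, and the reason the lemma is stated for $n\in{\mathcal E}_{2\vartheta,c_\vartheta}-1$: the relevant evaluation is not at $k=0$ but at the \emph{endpoint} $k=(n-1)/2$ of the Abel summation, where $u_{(n-1)/2}(\vartheta)$ appears with coefficient $v_{(n-1)/2}$. Actually the cleanest route is to keep the endpoint term $u_{(n-1)/2}(\vartheta)v_{(n-1)/2}$ explicit and compare it against the telescoping part. Writing $m=(n-1)/2$, one has $u_m(\vartheta)=\bigl(\sin(2(m+1)\vartheta)/\sin 2\vartheta\bigr)\exp(2im\vartheta)$, so $|u_m(\vartheta)|=|\sin((n+1)\vartheta)|/|\sin 2\vartheta|$. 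The condition $n+1\in{\mathcal E}_{2\vartheta,c_\vartheta}$, i.e. $\sin(2\vartheta)\sin((n+1)\cdot 2\vartheta)\ge c_\vartheta>1/2$ --- wait, here one should track the argument carefully: ${\mathcal E}_{2\vartheta,c_\vartheta}$ was defined via $\sin\omega\sin n\omega$ with $\omega=2\vartheta$, so $n\in{\mathcal E}_{2\vartheta,c_\vartheta}$ means $\sin(2\vartheta)\sin(2n\vartheta)\ge c_\vartheta$, giving $|\sin(2n\vartheta)|\ge c_\vartheta/|\sin 2\vartheta|$. Matching this to $|u_{m}(\vartheta)|$ forces $2(m+1)=n+1$ i.e. the statement is about $n+1\in 2\cdot{\mathcal E}$ or a shift thereof --- so I would simply set $m+1$ so that $2(m+1)\vartheta$ is the relevant phase and read off from $n\in{\mathcal E}_{2\vartheta,c_\vartheta}-1$ that $|u_m(\vartheta)|\ge c_\vartheta/\sin^2 2\vartheta$, a constant bounded away from zero. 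Thus the endpoint contribution has size $\ge (c_\vartheta/\sin^2 2\vartheta)\,v_{m}$. The main obstacle is that $v_m=\binom{n-1}{m}^2=\binom{n-1}{(n-1)/2}^2=O(4^n/n)$ is a factor $\sqrt n$ \emph{smaller} than $4^n/\sqrt n$, so the endpoint term alone is too small; the lower bound of order $4^n/\sqrt n$ must come from the bulk, and the role of ${\mathcal E}$ must be to prevent cancellation in the bulk sum rather than to inflate the endpoint.

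\textbf{Revised plan.} Given the above, the correct strategy is: use the two-term identity $t_n(\vartheta)=t_n^{(1)}(\vartheta)+\exp(4in\vartheta)t_n^{(1)}(-\vartheta)+O(4^n/n)$ and show $|t_n^{(1)}(\vartheta)|\gg 4^n/\sqrt n$ for $n$ in the prescribed set by the $k=0$ isolation above, sharpening the tail estimate. Specifically, split $t_n^{(1)}(\vartheta)=v_0+R_n$ with $R_n=\sum_{k=1}^{m}v_k\exp(4ik\vartheta)$; Abel summation on $R_n$ gives $|R_n|\le c_0(\vartheta)v_1$, but one improves this: the point is that $v_0-v_1=2\binom{0}{0}\binom{2n-2}{n-1}\cdot\frac{n-1}{n}>0$ is itself of order $4^n/\sqrt n$, so $v_0>v_1(1+c/n)$ is not enough --- one needs instead to observe that the \emph{first few} terms $v_0,v_1,v_2,\dots$ drop geometrically ($v_{k+1}/v_k\to 1/4$ for fixed $k$ as $n\to\infty$, since $v_{k+1}/v_k=\frac{(2k+1)(2k+2)}{(k+1)^2}\cdot\frac{(n-k)^2}{(2n-2k-1)(2n-2k)}\to 2\cdot\frac14=\frac12$)... hmm, that limit is $1/2$, not $1/4$. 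So $\sum_{k\ge 1}v_k\le v_1\sum_{j\ge 0}(1/2+o(1))^j\approx 2v_1\approx v_0$, again borderline.

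At this point I would conclude that a clean geometric-decay argument does not quite close, and the genuine input of ${\mathcal M}$ (Lemma \ref{MMM}) and ${\mathcal E}$ must be used to produce \emph{constructive interference} at one specific index $k$ near $n/2$ where $v_k$, though of size $4^n/n$, is multiplied by a factor $\sqrt n$ coming from summing a \emph{block} of consecutive $k$'s all pointing in nearly the same direction --- because near $k\approx n/2$ the phase $4k\vartheta$ varies slowly relative to how fast $v_k$ varies is false, so instead it is the Gaussian concentration: $v_k=\binom{2k}{k}\binom{2n-2k}{n-k}\sim \frac{4^n}{\pi\sqrt{k(n-k)}}$ for $k$ in the bulk, and $\sum_{k}v_k\exp(4ik\vartheta)$ is essentially a Gauss-type sum whose modulus one estimates from below using that $\exp(4i\vartheta)$ is a fixed unimodular number and quadratic Gauss sum lower bounds. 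Honestly, the rigorous route I would actually pursue: apply Lemma \ref{lem2}'s upper bound in reverse by estimating $t_n(\vartheta)-\overline{t_n(-\vartheta)}e^{4in\vartheta}$ or a similar combination whose main term is explicitly $\binom{2n}{n}u_{(n-1)/2}(\vartheta)$ times a non-vanishing factor, using $n\in{\mathcal E}_{2\vartheta,c_\vartheta}-1$ precisely to keep that factor $\ge$ const. I expect the main obstacle to be exactly this: showing the bulk sum $\sum_k v_k\exp(4ik\vartheta)$ does not suffer catastrophic cancellation, which is why the authors restrict to the positive-lower-density set ${\mathcal E}_{2\vartheta,c_\vartheta}-1$ of indices where the relevant sine factor is bounded below, and why Lemma \ref{harmo} (divergence of $\sum 1/a$ over positive-density sets) will be invoked afterwards to turn the pointwise lower bound $|t_n(\vartheta)|\gg 4^n/\sqrt n$ into divergence of the series $\sum (2n+1)|t_n(\vartheta)/4^n|^4$ for $p=2$.
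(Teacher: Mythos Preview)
Your proposal does not arrive at a proof. You correctly diagnose that each of your attempted routes (isolating the $k=0$ term of $t_n^{(1)}$, using the Abel endpoint term, geometric decay of $v_k$) is borderline and fails to close, but you never locate the mechanism that actually works. Your final speculative suggestion---forming a combination whose main term is $\binom{2n}{n}$ times some $u$-factor---is in the right spirit, but remains vague and gets the index wrong.

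The missing idea is a \emph{second} Abel summation, followed by combining $t_n^{(1)}(\vartheta)$ and $t_n^{(1)}(-\vartheta)$ via the symmetry identities. After the first Abel transformation one has $t_n^{(1)}(\vartheta)=\sum_k w_k u_k(\vartheta)+(\text{endpoint})$ with $w_k=v_k-v_{k+1}$ positive and \emph{itself decreasing} on the relevant range. Applying Abel's transformation again, and then using
\[
\alpha(-\vartheta)=-e^{-4i\vartheta}\alpha(\vartheta),\qquad \beta(-\vartheta)=-e^{4i\vartheta}\beta(\vartheta),\qquad u_k(-\vartheta)=e^{-4ik\vartheta}u_k(\vartheta)
\]
(where $\alpha(\vartheta)=e^{4i\vartheta}/(e^{4i\vartheta}-1)$ and $\beta(\vartheta)=-1/(e^{4i\vartheta}-1)$), the full expression for $t_n(\vartheta)$ collapses to
\[
t_n(\vartheta)=u_n(\vartheta)\,v_0 - e^{4i\vartheta}\sum_{k}u_{n-k-2}(\vartheta)\,u_k(\vartheta)\,|x_k| + O\!\left(\frac{4^n}{n}\right),
\]
with $x_k=w_{k+1}-w_k$. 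The middle sum is bounded in modulus by $\sin^{-2}(2\vartheta)\sum_k|x_k|=\sin^{-2}(2\vartheta)\,(w_0-w_{\text{end}})\le \sin^{-2}(2\vartheta)\,(v_0-v_1)\sim \tfrac12\sin^{-2}(2\vartheta)\,v_0$, while $n+1\in\mathcal{E}_{2\vartheta,c_\vartheta}$ gives
\[
|u_n(\vartheta)|=\frac{|\sin 2(n+1)\vartheta|}{|\sin 2\vartheta|}\ge \frac{c_\vartheta}{\sin^2 2\vartheta}.
\]
The hypothesis $c_\vartheta>\tfrac12$ is precisely what makes the main term dominate, yielding $|t_n(\vartheta)|\gg_\vartheta v_0\sim 4^n/\sqrt{\pi n}$.

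Two points you should take away. First, the correct $u$-index is $u_n(\vartheta)$, not $u_{(n-1)/2}(\vartheta)$; this is why the condition is on $n+1\in\mathcal{E}_{2\vartheta,c_\vartheta}$. Second, a single Abel summation cannot push the remainder below $c_0(\vartheta)v_0$; it is the second summation by parts (exploiting that $w_k$ is again monotone) that brings the remainder down to $\sim\tfrac12\sin^{-2}(2\vartheta)\,v_0$, and the threshold $\tfrac12$ here is the reason for the definition of $\mathcal{M}$.
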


We notice that it is well possible that the range of $\vartheta$ to which this lemma is applicable 
could be even extended.

\begin{proof}
We again distinguish two different cases.\smallskip

\noindent Case a) : we assume first that $n$ is odd. By \eqref{gggggg}
\[
t_{n}\left(  \vartheta\right)  =t_{n}^{(1)}\left(  \vartheta\right)
+\exp\left(  4in\vartheta\right)  t_{n}^{(1)}\left(  -\vartheta\right)
\]
where
\[
t_{n}^{(1)}\left(  \vartheta\right)  =\sum\limits_{k=0}^{(n-1)/2}\binom{2k}%
{k}\binom{2n-2k}{n-k}\exp\left(  4ik\vartheta\right)  .
\]
We now study the precise behavior of $t_{n}^{(1)}\left(  \vartheta\right)  $.
Recall
\[
t_{n}^{(1)}\left(  \vartheta\right)  =\sum\limits_{k=0}^{\left(  n-1\right)
/2}v_{k}\exp\left(  4ik\vartheta\right)
\]
where
\[
v_{k}=\binom{2k}{k}\binom{2n-2k}{n-k}.
\]
Recall that the sequence $\left(  v_{k}\right)  _{k\geq0}$ is decreasing for
$k\leq\left(  n-1\right)  /2$, $u_{-1}\left(  \vartheta\right)  =0$ and
$u_{k}\left(  \vartheta\right)  ={\sum\limits_{j=0}^{k}}\exp\left(
4ij\vartheta\right)  $ for $k\geq0$. We have
\[
\left\vert u_{k}\left(  \vartheta\right)  \right\vert =\left\vert \frac
{\sin\left(  2\left(  k+1\right)  \vartheta\right)  }{\sin2\vartheta
}\right\vert \leq\frac{1}{\left\vert \sin2\vartheta\right\vert }=c_{0}\left(
\vartheta\right)  .
\]
Abel's transformation gives
\begin{equation}
t_{n}^{(1)}\left(  \vartheta\right)  ={\sum\limits_{k=0}^{(n-1)/2-1}}\left(
v_{k}-v_{k+1}\right)  u_{k}\left(  \vartheta\right)  +u_{(n-1)/2}\left(
\vartheta\right)  v_{(n-1)/2}. \label{tn1}%
\end{equation}
We now define the sequence of positive real numbers ($k<n/2$)
\[
w_{k}=v_{k}-v_{k+1}=2\binom{2k}{k}\binom{2(n-k-1)}{n-k-1}\left(  \frac{1}%
{k+1}-\frac{1}{n-k}\right)  .
\]
We compute
\begin{align*}
x_{k}  &  =w_{k+1}-w_{k}\\
&  =4\binom{2k}{k}\binom{2(n-k-1)}{n-k-1}\\
&  \hspace{1.8cm} \times\left(  \frac{2}{(k+1)(n-k-1)} -\frac{3}%
{(k+1)(k+2)}-\frac{3}{(n-k)(n-k-1)}\right) \\
&  <12\binom{2k}{k}\binom{2(n-k-1)}{n-k-1}\left(  \frac{1}{(k+1)(n-k-1)}%
-\frac{1}{(k+1)(k+2)}\right) \\
&  =\frac{12}{k+1}\binom{2k}{k}\binom{2(n-k-1)}{n-k-1}\left(  \frac{1}%
{n-k-1}-\frac{1}{k+2}\right) \\
&  \leq0
\end{align*}
for $k\leq(n-3)/2$. It follows that the sequence $(w_{k})$ is decreasing up to
$(n-3)/2$.

Restarting from \eqref{tn1}, we obtain
\begin{align*}
t_{n}^{(1)}(\vartheta)  &  ={\sum\limits_{k=0}^{(n-1)/2-1}}w_{k}%
u_{k}(\vartheta)+u_{(n-1)/2}(\vartheta)v_{(n-1)/2}\\
&  =\sum_{k=0}^{(n-3)/2}w_{k}\frac{\exp(4i(k+1)\vartheta)-1}{\exp
(4i\vartheta)-1}+u_{(n-1)/2}(\vartheta)v_{(n-1)/2}\\
&  =\frac{1}{\exp(4i\vartheta)-1}\sum_{k=0}^{(n-3)/2}w_{k}(\exp
(4i(k+1)\vartheta)-1)+u_{(n-1)/2}(\vartheta)v_{(n-1)/2}\\
&  =\alpha(\vartheta)\sum_{k=0}^{(n-3)/2} w_{k}\exp(4ik\vartheta
)+\beta(\vartheta)\sum_{k=0}^{(n-3)/2}w_{k}+u_{(n-1)/2}(\vartheta
)v_{(n-1)/2}\\
&  =\alpha(\vartheta)\sum_{k=0}^{(n-3)/2}w_{k}\exp(4ik\vartheta)+\beta
(\vartheta)(v_{0}-v_{(n-1)/2})+u_{(n-1)/2}(\vartheta)v_{(n-1)/2}%
\end{align*}
where we define
\[
\alpha(\vartheta)=\frac{\exp(4i\vartheta)}{\exp(4i\vartheta)-1}\quad\text{ and
}\quad\beta(\vartheta)=-\frac{1}{\exp(4i\vartheta)-1}.
\]
We now perform a second Abel's transformation in the first term of this
expression and get
\begin{align*}
t_{n}^{(1)}(\vartheta)  &  =\alpha(\vartheta)\sum_{k=0}^{(n-3)/2}w_{k}%
\exp(4ik\vartheta)+\beta(\vartheta)(v_{0}-v_{(n-1)/2})+u_{(n-1)/2}%
(\vartheta)v_{(n-1)/2}\\
&  =\alpha(\vartheta)\left(  {\sum\limits_{k=0}^{(n-3)/2-1}}\left(
w_{k}-w_{k+1}\right)  u_{k}\left(  \vartheta\right)  +u_{(n-3)/2}\left(
\vartheta\right)  w_{(n-3)/2}\right) \\
&  \hspace{4.5cm}+\beta(\vartheta)(v_{0}-v_{(n-1)/2})+u_{(n-1)/2}%
(\vartheta)v_{(n-1)/2}\\
&  =\alpha(\vartheta)\left(  {\sum\limits_{k=0}^{(n-5)/2}}\left\vert
x_{k}\right\vert u_{k}\left(  \vartheta\right)  +u_{(n-3)/2}\left(
\vartheta\right)  w_{(n-3)/2}\right) \\
&  \hspace{4.5cm}+\beta(\vartheta)(v_{0}-v_{(n-1)/2})+u_{(n-1)/2}%
(\vartheta)v_{(n-1)/2}.
\end{align*}
One now checks that
\[
w_{(n-3)/2},v_{(n-1)/2}\leq C\frac{4^{n}}{n}%
\]
for some positive constant $C,$ which in view of the upper bound for the
$u_{i}(\vartheta)$ implies
\[
t_{n}^{(1)}(\vartheta)=\alpha(\vartheta)\left(  {\sum\limits_{k=0} ^{(n-5)/2}%
}\left\vert x_{k}\right\vert u_{k}\left(  \vartheta\right)  \right)
+\beta(\vartheta)v_{0}+O\left(  \frac{4^{n}}{n}\right)  .
\]

We thus deduce that%
\begin{align}
t_{n}\left(  \vartheta\right)   &  =t_{n}^{(1)}\left(  \vartheta\right)
+\exp\left(  4in\vartheta\right)  t_{n}^{(1)}\left(  -\vartheta\right)
\nonumber\\
&  =\alpha(\vartheta)\left(  {\sum\limits_{k=0}^{(n-5)/2}}\left\vert
x_{k}\right\vert u_{k}\left(  \vartheta\right)  \right)  +\beta(\vartheta
)v_{0}\nonumber\\
&  +\exp\left(  4in\vartheta\right)  \left(  \alpha(-\vartheta)\left(
{\sum\limits_{k=0}^{(n-5)/2}}\left\vert x_{k}\right\vert u_{k}\left(
-\vartheta\right)  \right)  +\beta(-\vartheta)v_{0}\right)  +O\left(
\frac{4^{n}}{n}\right) \nonumber\\
&  =\sum_{k=0}^{(n-5)/2}\big(\alpha(\vartheta)u_{k}(\vartheta)+\exp
(4in\vartheta)\alpha(-\vartheta)u_{k}(-\vartheta)\big)\left\vert
x_{k}\right\vert \label{eq23}\\
&  +\left(  \beta(\vartheta)+\exp\left(  4in\vartheta\right)  \beta
(-\vartheta)\right)  v_{0}+O\left(  \frac{4^{n}}{n}\right)  .\nonumber
\end{align}

From the identities
\begin{align}
\alpha\left(  -\vartheta\right)   &  =-\exp\left(  -4i\vartheta\right)
\alpha\left(  \vartheta\right) \nonumber\\
\beta\left(  -\vartheta\right)   &  =-\exp\left(  4i\vartheta\right)
\beta\left(  \vartheta\right) \nonumber\\
u_{k}\left(  -\vartheta\right)   &  =\exp\left(  -4ik\vartheta\right)
u_{k}\left(  \vartheta\right) \nonumber
\end{align}
and \eqref{eq23} we get
\begin{align*}
t_{n}\left(  \vartheta\right)   &  = \alpha(\vartheta)\sum_{k=0}^{(n-5)/2}
\left(  1-\exp(4i\left(  n-k-1\right)  \vartheta)\right)  u_{k}(\vartheta
)\left\vert x_{k}\right\vert \\
&  \hspace{2.5cm}+\left(  1-\exp(4i\left(  n+1\right)  \vartheta)\right)
\beta\left(  \vartheta\right)  v_{0}+O\left(  \frac{4^{n}}{n}\right) \\
&  = u_{n}\left(  \vartheta\right)  v_{0}-\exp\left(  4i\vartheta\right)
\sum_{k=0}^{(n-5)/2}u_{n-k-2}\left(  \vartheta\right)  u_{k}(\vartheta
)\left\vert x_{k}\right\vert +O\left(  \frac{4^{n}}{n}\right)  .
\end{align*}
Since%
\begin{equation}
\label{coscos}\left\vert u_{n-k-2}\left(  \vartheta)\right)  u_{k}\left(
\vartheta\right)  \right\vert \leq\frac{1}{\sin^{2}\left(  2\vartheta\right)
},
\end{equation}
we get
\begin{align*}
\left\vert \exp\left(  4i\vartheta\right)  \sum_{k=0}^{(n-5)/2}u_{n-k-2}
\left(  \vartheta\right)  u_{k}(\vartheta)x_{k}\right\vert  &  \leq\frac
{1}{\sin^{2}\left(  2\vartheta\right)  }\sum_{k=0}^{(n-5)/2}\left\vert
x_{k}\right\vert \\
&  =\frac{1}{\sin^{2}\left(  2\vartheta\right)  }\sum_{k=0}^{(n-5)/2}\left(
w_{k}-w_{k+1}\right) \\
&  =\frac{1}{\sin^{2}\left(  2\vartheta\right)  }\left(  w_{0}-w_{\frac
{n-3}{2}}\right) \\
&  \leq\frac{1}{\sin^{2}\left(  2\vartheta\right)  }w_{0}\\
&  =\frac{1}{\sin^{2}\left(  2\vartheta\right)  }\left(  v_{0}-v_{1}\right) \\
&  \sim\frac{1}{2\sin^{2}\left(  2\vartheta\right)  }v_{0},
\end{align*}
since $v_{1}\sim v_{0} /2$.

To summarize, in
\[
t_{n}\left(  \vartheta\right)  = u_{n} \left(  \vartheta\right)  v_{0} -
\exp\left(  4i\vartheta\right)  \sum_{k=0}^{(n-5)/2}u_{n-k-2}\left(
\vartheta\right)  u_{k}(\vartheta)\left\vert x_{k}\right\vert + O\left(
\frac{4^{n}}{n}\right)
\]
the first term is
\[
\sim\frac{\sin(2(n+1)\vartheta)}{\sin(2 \vartheta)} v_{0},
\]
the second one is
\[
\lesssim\frac{1}{2\sin^{2}\left(  2\vartheta\right)  }v_{0}%
\]
and the third one
\[
O\left(  \frac{v_{0}}{\sqrt{n}}\right)  .
\]
Therefore, if $n+1 \in{\mathcal{E}}_{2 \vartheta,c_{\vartheta}}$ is large
enough, we deduce that the sum satisfies
\[
\left\vert t_{n}(\vartheta)\right\vert \gg_{\vartheta}\frac{4^{n}}{\sqrt{n}}.
\]
\medskip

\noindent Case b) : If $n$ is now even, then
\[
t_{n}\left(  \vartheta\right)  =t_{n}^{(1)}\left(  \vartheta\right)
+\exp\left(  4in\vartheta\right)  t_{n}^{(1)}\left(  -\vartheta\right)
+\binom{n}{\frac{n}{2}}^{2}\exp\left(  2in\vartheta\right)
\]
where
\[
t_{n}^{(1)}\left(  \vartheta\right)  =\sum\limits_{k=0}^{\left(  n/2\right)
-1}\binom{2k}{k}\binom{2n-2k}{n-k}\exp\left(  4ik\vartheta\right)  .
\]
By similar computations as above, and using
\[
\binom{n}{\frac{n}{2}}^{2}\exp\left(  2in\vartheta\right)  =O\left(
\frac{4^{n}}{n}\right)  ,
\]
we get
\[
t_{n}\left(  \vartheta\right)  =u_{n}\left(  \vartheta\right)  v_{0}
-\exp\left(  4i\vartheta\right)  \sum_{k=0}^{(n-4)/2}u_{n-k-2}\left(
\vartheta\right)  u_{k}(\vartheta)\left\vert x_{k}\right\vert +O\left(
\frac{4^{n}}{n}\right)  .
\]
Similar arguments as above give the conclusion.
\medskip

Combining Case $a)$ and Case $b)$, we deduce the result announced.
\end{proof}

We can now conclude this section and prove our last result.

\begin{proof}[Proof of Theorem \ref{Main} (ii)]
Let $\vartheta \in (\pi/12, 5 \pi /12)$. 

On the one hand, by Proposition \ref{prop12}, we see that the element
$$
a_{\vartheta}=\left(
\begin{array}
[c]{cc}%
e^{i\vartheta} & 0\\
0 & e^{-i\vartheta}%
\end{array}
\right)
\notin N_{U}\left(  K \right).
$$

On the other hand, we have $2 \vartheta \in {\mathcal{M}}$ by Lemma \ref{MMM}. Thus there exists 
$c_\vartheta>1/2$ such that ${\mathcal{E}}_{2\vartheta, c_{\vartheta}}$ has a positive lower density.
Now, the series associated to $a_\vartheta$ by \eqref{eq2} is, in view of \eqref{series1} and \eqref{eq02},
\begin{eqnarray*}
\left\Vert \widehat{\mu}_{a_{\vartheta}}^{\left(  2\right)  }\right\Vert _{2}^2 & = &
 \sum_{n=1}^{\infty} (2n+1)\left\vert \frac{t_n ( \vartheta )  }{4^{n}}\right\vert ^{4}			\\
						& \geq & \sum_{n \in {\mathcal{E}}_{2\vartheta, c_{\vartheta}}-1} (2n+1)\left\vert \frac{t_n ( \vartheta )  }{4^{n}}\right\vert ^{4}\\	
						& \geq & C^{\prime}\left(  \vartheta\right)^4 \sum_{n \in {\mathcal{E}}_{2\vartheta, c_{\vartheta}}-1} \frac{2n+1}{n^2}
\end{eqnarray*}
by Lemma \ref{lem3}. It follows that
$$
\left\Vert \widehat{\mu}_{a_{\vartheta}}^{\left(  2\right)  }\right\Vert _{2}^2 \geq 2 C^{\prime}\left(  \vartheta\right)^4 \sum_{n \in {\mathcal{E}}_{2\vartheta, c_{\vartheta}}-1} \frac{1}{n},
$$
a series which is divergent by Lemma \ref{harmo} applied to the set ${\mathcal{E}}_{2\vartheta, c_{\vartheta}}-1$ which has a positive lower density.
\end{proof}

\end{document}